   \numberwithin{equation}{section}
\newtheorem{thm}{Theorem}[section]
\newtheorem{lem}[thm]{Lemma}
\newtheorem{prop}[thm]{Proposition}
\begin{document}
\begin{frontmatter}
\author[rvt1]{Jian Wang}
\ead{wangj484@nenu.edu.cn}
\author[rvt2]{Yong Wang\corref{cor2}}
\ead{wangy581@nenu.edu.cn}
\author[rvt2]{Tong Wu}
\author[rvt2]{Yuchen Yang}
\cortext[cor2]{Corresponding author.}
\address[rvt1]{School of Science, Tianjin University of Technology and Education, Tianjin, 300222, P.R.China}
\address[rvt2]{School of Mathematics and Statistics, Northeast Normal University,
Changchun, 130024, P.R.China}

\title{One-forms, spectral Einstein functionals and the noncommutative residue  }
\begin{abstract}
For two one-forms and the Dirac operator, Dabrowski etc. recovered the spectral Einstein functionals by computing their
noncommutative residue in Theorem 4.1 \cite{DL}. In this paper, we generalize the results of Dabrowski etc. to the
cases of four dimensional spin manifolds with boundary.
\end{abstract}
\begin{keyword}
 Dirac operator; Noncommutative residue for manifolds with boundary; One-forms; Spectral Einstein functional.
\end{keyword}
\end{frontmatter}
\section{Introduction}
\label{1}

The theory of noncommutative residue  for one-dimensional manifolds was discovered by Manin \cite{YIM} and Adler \cite{MA} in connection
 with geometric aspects of nonlinear partial differential equations.
For arbitrary closed compact $n$-dimensional manifolds, the noncommutative reside was introduced by
Wodzicki in \cite{Wo,Wo1} using the theory of zeta functions of elliptic pseudodifferential operators.
Let $E$ be a finite-dimensional complex vector bundle over a closed compact manifold $M$
of dimension $n$, the noncommutative residue of a pseudo-differential operator
$P\in\Psi DO(E)$ can be defined by
 \begin{equation}
{\rm  Wres}(P):=(2\pi)^{-n}\int_{S^{*}M}\mathrm{Tr}(\sigma_{-n}^{P}(x,\xi))\mathrm{d}x \mathrm{d}\xi,
\end{equation}
where $S^{*}M\subset T^{*}M$ denotes the co-sphere bundle on $M$ and
$\sigma_{-n}^{P}$ is the component of order $-n$ of the complete symbol
 \begin{equation}
\sigma^{P}:=\sum_{i}\sigma_{i}^{P}
\end{equation}
of $P$, and the linear functional ${\rm  Wres}: \Psi DO(E)\rightarrow \mathbb{C }$
is in fact the unique trace (up to multiplication
by constants) on the algebra of pseudo-differential operators $\Psi DO(E)$.

In \cite{Co1}, Connes  computed a conformal four-dimensional
 Polyakov action analogy using the noncommutative residue.
Connes  proved that the noncommutative residue on a compact manifold $M$ coincided with Dixmier's trace on pseudodifferential
operators of order -dim$M$\cite{Co2}\cite{Co3}. The theory has very rich structures both in physics and mathematics.
More precisely, Connes made a challenging observation that
the Wodzicki residue of the inverse square of the Dirac operator yields the
Einstein-Hilbert action of general relativity. Kastler\cite{Ka} gave a brute-force proof of this theorem, and Kalau and
 Walze\cite{KW} proved
this theorem in the normal coordinates system simultaneously, which is called the Kastler-Kalau-Walze theorem now.
Let $s$ be the scalar curvature and Wres denote  the noncommutative residue, then the Kastler-Kalau-Walze
theorem gives an operator-theoretic explanation of the gravitational action and says that for a $4-$dimensional closed spin manifold and Dirac operator $D$,
 there exists a constant $c_0$, such that
 \begin{equation*}
{\rm  Wres}(D^{-2})=c_0\int_Ms{\rm dvol}_M.
\end{equation*}
On the other hand, Fedosov etc. defined a noncommutative residue on Boutet de Monvel's algebra and proved that it was a
unique continuous trace in \cite{FGLS}, and generalized the definition
of noncommutative residue to  manifolds with boundary.
 In \cite{SCH}, Schrohe gave the relation between the Dixmier trace and the noncommutative residue for
manifolds with boundary. For elliptic pseudodifferential operators, Wang proved the Kastler-Kalau-Walze type theorem and
gave the operator-theoretic explanation of the gravitational action for lower dimensional manifolds with boundary \cite{Wa1,Wa3,Wa4}.

In the noncommutative realm the spectral-theoretic approach to scalar curvature has been
extended also to quantum tori in the seminal work of Connes and Moscovici\cite{CoM}.
Furthermore, the pseudodifferential operators and symbol calculus
introduced in \cite{Co4} and extended to crossed product algebras in \cite{SBa1}\cite{SBa2}, have been employed for computations of
certain values and residues of zeta functions of suitable Laplace type operators.
Recently, in order to recover other important tensors in both the classical
setup as well as for the generalised or quantum geometries, for the metric tensor $g$, Ricci curvature $Ric$ and the scalar curvature $s$,
 Dabrowski etc. \cite{DL} defined  bilinear functionals
  \begin{equation}
G:=Ric-\frac{1}{2}s(g)g,
\end{equation}
and they demonstrated that the noncommutative residue density recovered
the tensors $g$ and $G$ as certain bilinear functionals of vector fields on a manifold $M$, while their
dual tensors are recovered as a density of bilinear functionals of differential one-forms on $M$.
Motivated by the spectral functionals over the dual bimodule of one-forms in Theorem 4.1 \cite{DL} and
the Kastler-Kalau-Walze type theorem\cite{Ka}\cite{KW},
we give some new spectral functionals which are the extension of spectral functionals for Dirac operator with Clifford multiplication
 by the local coframe basis, and we relate them to the noncommutative residue for manifolds with boundary.
  For lower dimensional compact Riemannian manifolds  with  boundary, we compute the residue
  of the type I operator ${\rm{\widetilde{Wres}}}\Big(\pi^+\big(c(w)(Dc(v)+c(v)D)D^{-1}\big)\circ \pi^+(D^{-2})\Big)$,
  the residue  of the type II operator  ${\rm{\widetilde{Wres}}}\Big(\pi^+\big(c(w)(Dc(v)+c(v)D)D^{-2}\big)\circ \pi^+(D^{-1})\Big)$
 and obtain the Dabrowski-Sitarz-Zalecki type theorems for four  dimensional spin manifolds with boundary, which we note that
 generalize the Theorem 4.1 in \cite{DL}. In \cite{JYT}, we have already generalized the Theorem 3.2 and the Lemma 3.3 in \cite{DL}
  to the cases of Dirac
 operators with torsion for four  dimensional spin manifolds with boundary.

\section{Spectral functionals over the dual bimodule of one-forms}
 \subsection{Boutet de Monvel's calculus}

To define the lower dimensional volume, some basic facts and formulae about Boutet de Monvel's calculus which can be found  in Sec.2 in \cite{Wa1}
are needed.

Let $$ F:L^2({\bf R}_t)\rightarrow L^2({\bf R}_v);~F(u)(v)=\int e^{-ivt}u(t){\rm d}t$$ denote the Fourier transformation and
$\varphi(\overline{{\bf R}^+}) =r^+\varphi({\bf R})$ (similarly define $\varphi(\overline{{\bf R}^-}$)), where $\varphi({\bf R})$
denotes the Schwartz space and
  \begin{equation}
r^{+}:C^\infty ({\bf R})\rightarrow C^\infty (\overline{{\bf R}^+});~ f\rightarrow f|\overline{{\bf R}^+};~
 \overline{{\bf R}^+}=\{x\geq0;x\in {\bf R}\}.
\end{equation}
We define $H^+=F(\varphi(\overline{{\bf R}^+}));~ H^-_0=F(\varphi(\overline{{\bf R}^-}))$ which are orthogonal to each other. We have the following
 property: $h\in H^+~(H^-_0)$ iff $h\in C^\infty({\bf R})$ which has an analytic extension to the lower (upper) complex
half-plane $\{{\rm Im}\xi<0\}~(\{{\rm Im}\xi>0\})$ such that for all nonnegative integer $l$,
 \begin{equation}
\frac{{\rm d}^{l}h}{{\rm d}\xi^l}(\xi)\sim\sum^{\infty}_{k=1}\frac{{\rm d}^l}{{\rm d}\xi^l}(\frac{c_k}{\xi^k})
\end{equation}
as $|\xi|\rightarrow +\infty,{\rm Im}\xi\leq0~({\rm Im}\xi\geq0)$.

 Let $H'$ be the space of all polynomials and $H^-=H^-_0\bigoplus H';~H=H^+\bigoplus H^-.$ Denote by $\pi^+~(\pi^-)$ respectively the
 projection on $H^+~(H^-)$. For calculations, we take $H=\widetilde H=\{$rational functions having no poles on the real axis$\}$ ($\tilde{H}$
 is a dense set in the topology of $H$). Then on $\tilde{H}$,
 \begin{equation}
\pi^+h(\xi_0)=\frac{1}{2\pi i}\lim_{u\rightarrow 0^{-}}\int_{\Gamma^+}\frac{h(\xi)}{\xi_0+iu-\xi}{\rm d}\xi,
\end{equation}
where $\Gamma^+$ is a Jordan close curve included ${\rm Im}\xi>0$ surrounding all the singularities of $h$ in the upper half-plane and
$\xi_0\in {\bf R}$. Similarly, define $\pi'$ on $\tilde{H}$,
 \begin{equation}
\pi'h=\frac{1}{2\pi}\int_{\Gamma^+}h(\xi){\rm d}\xi.
\end{equation}
So, $\pi'(H^-)=0$. For $h\in H\bigcap L^1(R)$, $\pi'h=\frac{1}{2\pi}\int_{R}h(v)\texttt{d}v$ and for $h\in H^+\bigcap L^1(R)$, $\pi'h=0$.
Denote by $\mathcal{B}$ Boutet de Monvel's algebra (for details, see Section 2 of \cite{Wa1}).

An operator of order $m\in {\bf Z}$ and type $d$ is a matrix
$$A=\left(\begin{array}{lcr}
  \pi^+P+G  & K  \\
   T  &  S
\end{array}\right):
\begin{array}{cc}
\   C^{\infty}(X,E_1)\\
 \   \bigoplus\\
 \   C^{\infty}(\partial{X},F_1)
\end{array}
\longrightarrow
\begin{array}{cc}
\   C^{\infty}(X,E_2)\\
\   \bigoplus\\
 \   C^{\infty}(\partial{X},F_2)
\end{array}.
$$
where $X$ is a manifold with boundary $\partial X$ and
$E_1,E_2~(F_1,F_2)$ are vector bundles over $X~(\partial X
)$.~Here,~$P:C^{\infty}_0(\Omega,\overline {E_1})\rightarrow
C^{\infty}(\Omega,\overline {E_2})$ is a classical
pseudodifferential operator of order $m$ on $\Omega$, where
$\Omega$ is an open neighborhood of $X$ and
$\overline{E_i}|X=E_i~(i=1,2)$. $P$ has an extension:
$~{\cal{E'}}(\Omega,\overline {E_1})\rightarrow
{\cal{D'}}(\Omega,\overline {E_2})$, where
${\cal{E'}}(\Omega,\overline {E_1})~({\cal{D'}}(\Omega,\overline
{E_2}))$ is the dual space of $C^{\infty}(\Omega,\overline
{E_1})~(C^{\infty}_0(\Omega,\overline {E_2}))$. Let
$e^+:C^{\infty}(X,{E_1})\rightarrow{\cal{E'}}(\Omega,\overline
{E_1})$ denote extension by zero from $X$ to $\Omega$ and
$r^+:{\cal{D'}}(\Omega,\overline{E_2})\rightarrow
{\cal{D'}}(\Omega, {E_2})$ denote the restriction from $\Omega$ to
$X$, then define
$$\pi^+P=r^+Pe^+:C^{\infty}(X,{E_1})\rightarrow {\cal{D'}}(\Omega,
{E_2}).$$
In addition, $P$ is supposed to have the
transmission property; this means that, for all $j,k,\alpha$, the
homogeneous component $p_j$ of order $j$ in the asymptotic
expansion of the
symbol $p$ of $P$ in local coordinates near the boundary satisfies:
$$\partial^k_{x_n}\partial^\alpha_{\xi'}p_j(x',0,0,+1)=
(-1)^{j-|\alpha|}\partial^k_{x_n}\partial^\alpha_{\xi'}p_j(x',0,0,-1),$$
then $\pi^+P:C^{\infty}(X,{E_1})\rightarrow C^{\infty}(X,{E_2})$
by Section 2.1 of \cite{Wa1}.

Now we recall the main theorem in \cite{FGLS}.
\begin{thm}\label{th:32}{\bf(Fedosov-Golse-Leichtnam-Schrohe)}
 Let $X$ and $\partial X$ be connected, ${\rm dim}X=n\geq3$,
 $A=\left(\begin{array}{lcr}\pi^+P+G &   K \\
T &  S    \end{array}\right)$ $\in \mathcal{B}$ , and denote by $p$, $b$ and $s$ the local symbols of $P,G$ and $S$ respectively.
 Define:
 \begin{align}
{\rm{\widetilde{Wres}}}(A)=&\int_X\int_{\bf S}{\mathrm{Tr}}_E\left[p_{-n}(x,\xi)\right]\sigma(\xi){\rm d}x \nonumber\\
&+2\pi\int_ {\partial X}\int_{\bf S'}\left\{{\mathrm{Tr}}_E\left[({\mathrm{Tr}}b_{-n})(x',\xi')\right]+{\mathrm{Tr}}
_F\left[s_{1-n}(x',\xi')\right]\right\}\sigma(\xi'){\rm d}x',
\end{align}
Then~~ a) ${\rm \widetilde{Wres}}([A,B])=0 $, for any
$A,B\in\mathcal{B}$;~~ b) It is a unique continuous trace on
$\mathcal{B}/\mathcal{B}^{-\infty}$.
\end{thm}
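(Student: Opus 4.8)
The plan is to treat the two assertions separately, exploiting the decomposition of the functional into an interior piece and a boundary piece. Write $\widetilde{\rm Wres}=W_{\rm int}+W_{\partial}$, where $W_{\rm int}(A)=\int_X\int_{\bf S}\mathrm{Tr}_E[p_{-n}(x,\xi)]\sigma(\xi){\rm d}x$ and $W_{\partial}$ is the boundary integral over $\partial X$ carrying the terms $\mathrm{Tr}\,b_{-n}$ and $s_{1-n}$. For part (a) I would first reduce the vanishing of $\widetilde{\rm Wres}([A,B])$ to tracking how each entry of the matrices $A$ and $B$ contributes to the commutator $[A,B]$. The crucial structural fact is that $\pi^+$ is not multiplicative: one has $\pi^+P\circ\pi^+Q=\pi^+(PQ)+L(P,Q)$, where the leftover term $L(P,Q)$ is a singular Green operator measuring the failure of multiplicativity. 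Consequently the upper-left entry of $AB$ carries, besides $\pi^+$ of an interior product, genuine Green-operator contributions whose order $-n$ symbols feed into $W_{\partial}$.

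For the interior part I would invoke the fact that $W_{\rm int}$, restricted to the algebra of classical pseudodifferential operators with the transmission property, agrees (after restriction to $X$) with the Wodzicki residue on the closed double of $X$. Wodzicki's theorem shows that the residue density $\mathrm{Tr}[p_{-n}([P,Q])]$ is a total $x$-divergence; on a closed manifold its integral vanishes, but on $X$ an application of Stokes' formula leaves a boundary integral over $\partial X$. The identity to establish is that this interior boundary defect is cancelled exactly by the residue of the leftover Green operators $L(P,Q)-L(Q,P)$ together with the boundary commutator $[S,S']$. This cancellation, performed in the model half-space using the precise transmission expansion of the symbols, is what forces the coefficient $2\pi$ and the particular combination $\mathrm{Tr}_E[\mathrm{Tr}\,b_{-n}]+\mathrm{Tr}_F[s_{1-n}]$ occurring in the definition.

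The main obstacle is precisely this boundary bookkeeping. One must control the full asymptotic symbol of the leftover Green operators $L(P,Q)$, together with the boundary contributions $KT'$, $TK'$ and $SS'$ arising in the matrix product, and verify term by term in the half-space model that their order $-n$ residue symbols integrate against the interior divergence defect to zero. This is a delicate computation entangling the $\pi^+$-projection with the Fourier-analytic structure of $H^+$ described above, and it is where essentially all the analytic content of part (a) resides.

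For the uniqueness in part (b) I would argue algebraically. Any continuous trace $\tau$ on $\mathcal{B}/\mathcal{B}^{-\infty}$ annihilates commutators, so it factors through the quotient $\mathcal{B}/(\mathcal{B}^{-\infty}+\overline{[\mathcal{B},\mathcal{B}]})$, and the heart of the matter is to show this quotient is one-dimensional. Using the symbol calculus I would prove that any $A$ with $\widetilde{\rm Wres}(A)=0$ lies in the closed span of commutators: given a homogeneous symbol whose residue integrand has vanishing integral over the cosphere bundles ${\bf S}$ and ${\bf S}'$, one writes it, modulo lower-order terms, as a finite sum of symbol Poisson brackets $\{a,x_j\}$ and $\{a,\xi_j\}$ --- that is, as principal symbols of commutators --- by invoking surjectivity of the divergence operator onto the space of symbols of mean zero. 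Once the quotient is known to be one-dimensional and $\widetilde{\rm Wres}$ is verified to be nonzero on it, every continuous trace must be a scalar multiple of $\widetilde{\rm Wres}$, which yields both existence and uniqueness.
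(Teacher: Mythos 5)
You should first be aware that the paper does not prove this statement at all: Theorem 2.1 is quoted verbatim from Fedosov--Golse--Leichtnam--Schrohe \cite{FGLS} ("Now we recall the main theorem in \cite{FGLS}"), and the authors use it as a black box to derive Propositions 2.4 and 2.5. So there is no internal proof to compare your proposal against; it has to be judged against the original argument in \cite{FGLS}.

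As a roadmap your proposal is faithful to that argument --- the splitting $\widetilde{\rm Wres}=W_{\rm int}+W_{\partial}$, the leftover singular Green operator $L(P,Q)$ measuring the failure of $\pi^+$ to be multiplicative, the observation that ${\rm Tr}[p_{-n}([P,Q])]$ is a divergence whose Stokes defect on $X$ must be cancelled by the boundary residues, and the uniqueness via one-dimensionality of $\mathcal{B}/(\mathcal{B}^{-\infty}+\overline{[\mathcal{B},\mathcal{B}]})$ are all the right ingredients. But there are two genuine gaps. First, you explicitly defer the half-space computation that identifies the order $-n$ residue symbol of $L(P,Q)-L(Q,P)$ (together with the $KT'$, $TK'$, $GG'$ and $SS'$ contributions) and shows it cancels the interior boundary defect with the normalizing factor $2\pi$; this is not bookkeeping that can be waved at --- it is the entire content of part (a), and without it one cannot even confirm that the stated combination ${\rm Tr}_E[{\rm Tr}\,b_{-n}]+{\rm Tr}_F[s_{1-n}]$ is the correct one. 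Second, your uniqueness argument never uses the hypotheses that $X$ and $\partial X$ are connected and that $n\geq 3$, yet these are exactly what force the commutator quotient to be one-dimensional: disconnectedness produces one independent trace per component, and for $n=2$ the one-dimensional boundary calculus admits additional traces, so the "surjectivity of the divergence onto mean-zero symbols" step you invoke must be justified separately on the cosphere bundles ${\bf S}$ and ${\bf S}'$ and fails in low dimension. As it stands the proposal is a correct plan with the decisive analytic steps missing.
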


\subsection{Spectral functionals over the dual bimodule of one-forms}

In this section we consider an $n$-dimensional oriented Riemannian manifold $(M, g^{M})$ with boundary $\partial_{ M}$ equipped
with a fixed spin structure. We assume that the metric $g^{M}$ on $M$ has
the following form near the boundary
 \begin{equation}
 g^{M}=\frac{1}{h(x_{n})}g^{\partial M}+{\rm d}x _{n}^{2} ,
\end{equation}
where $g^{\partial M}$ is the metric on $\partial M$. Let $U\subset
M$ be a collar neighborhood of $\partial M$ which is diffeomorphic $\partial M\times [0,1)$. By the definition of $h(x_n)\in C^{\infty}([0,1))$
and $h(x_n)>0$, there exists $\tilde{h}\in C^{\infty}((-\varepsilon,1))$ such that $\tilde{h}|_{[0,1)}=h$ and $\tilde{h}>0$ for some
sufficiently small $\varepsilon>0$. Then there exists a metric $\hat{g}$ on $\hat{M}=M\bigcup_{\partial M}\partial M\times
(-\varepsilon,0]$ which has the form on $U\bigcup_{\partial M}\partial M\times (-\varepsilon,0 ]$
 \begin{equation}
\hat{g}=\frac{1}{\tilde{h}(x_{n})}g^{\partial M}+{\rm d}x _{n}^{2} ,
\end{equation}
such that $\hat{g}|_{M}=g$.
We fix a metric $\hat{g}$ on the $\hat{M}$ such that $\hat{g}|_{M}=g$.

Recall the definition of the Dirac operator $D$.
 Let $\nabla^L$ denote the Levi-Civita connection about $g^M$. In the local coordinates $\{x_i; 1\leq i\leq n\}$ and the
 fixed orthonormal frame $\{e_1,\cdots,e_n\}$, the connection matrix $(\omega_{s,t})$ is defined by
\begin{equation}
\nabla^L(e_1,\cdots,e_n)= (e_1,\cdots,e_n)(\omega_{s,t}).
\end{equation}
 The Dirac operator is defined by
\begin{equation}
D=\sum^n_{i=1}c(e_i)\Big[e_i
-\frac{1}{4}\sum_{s,t}\omega_{s,t}(e_i)c(e_s)c(e_t)\Big].
\end{equation}
where $c(e_i)$ denotes the Clifford action.

The following Theorem of Dabrowski etc.'s Einstein functional \cite{DL} play a key role in our proof
of the Einstein functional for manifold with boundary.
let $\bar{v}$,$\bar{w}$ with the components with
respect to local coordinates $\bar{v}_{a}$ and $\bar{w}_{a}$, respectively, be two differential forms represented in
such a way as endomorphisms (matrices) $c(\bar{v}) $ and $c(\bar{w}) $ on the spinor bundle.
We assume thus that $M$ is a
$n = 2m$ dimensional spin manifold and use the Clifford representation of one-forms as 0-
order differential operators, that is, endomorphisms of a rank $2^{m}$ spinor bundle.
 Using the  operator $c(\bar{w})(Dc(\bar{v})+c(\bar{v})D)D^{-n+1}$ acting on sections of
 a vector bundle $S(TM)$ of rank $2^{m}$,
 the spectral functionals over the dual bimodule of one-forms defined by
\begin{lem}\cite{DL}
The Einstein functional equal to
 \begin{equation}
Wres\big(c(\bar{w})(Dc(\bar{v})+c(\bar{v})D)D^{-n+1}\big)=\frac{\upsilon_{n-1}}{6}2^{m}\int_{M}(Ric^{ab}
-\frac{1}{2}s(g)g^{ab})\bar{v}_{a} \bar{w}_{b}{\rm vol}_{g},
\end{equation}
where $g^{*}(\bar{v},\bar{w})=g^{ab}v_{a}w_{b}$ and $G(\bar{v},\bar{w})= (Ric^{ab}
-\frac{1}{2}s(g)g^{ab})\bar{v}_{a} \bar{w}_{b}$ denotes the Einstein tensor evaluated on the two one-forms, where
$\bar{v}=\sum_{j=1}^{n}v_{j}dx^{j} $, $\bar{w}=\sum_{l=1}^{n}w_{l}dx^{l} $ and $\upsilon_{n-1}=\frac{2\pi^{m}}{\Gamma(m)}$ .
\end{lem}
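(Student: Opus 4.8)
The plan is to compute the noncommutative residue directly from its symbolic definition,
$\mathrm{Wres}(P)=(2\pi)^{-n}\int_{S^{*}M}\mathrm{Tr}(\sigma_{-n}^{P}(x,\xi))\,\mathrm{d}x\,\mathrm{d}\xi$, applied to $P=c(\bar{w})(Dc(\bar{v})+c(\bar{v})D)D^{-n+1}$ on the closed manifold. Since the residue density is local, I would localize with a partition of unity and perform the whole computation at an arbitrary point $x_{0}$ in Riemannian normal coordinates adapted to a synchronous orthonormal frame, so that at $x_{0}$ the metric is Euclidean and the connection one-form and Christoffel symbols vanish, leaving only their first derivatives, which are exactly what carries the curvature into the answer.

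First I would simplify the middle factor into a structurally transparent form. Using that the spin connection is a Clifford derivation, $\nabla^{S}_{e_i}c(\bar{v})=c(\nabla_{e_i}\bar{v})$, together with the Clifford relation $\{c(e_i),c(\bar{v})\}=-2\bar{v}_i$, one obtains the operator identity
\begin{equation}
Dc(\bar{v})+c(\bar{v})D=-2\nabla^{S}_{\bar{v}}+\sum_{i}c(e_i)c(\nabla_{e_i}\bar{v}),
\end{equation}
which exhibits $A:=Dc(\bar{v})+c(\bar{v})D$ as a first-order operator with a \emph{scalar} principal symbol $-2i\langle\bar{v},\xi\rangle$ plus an explicit zeroth-order endomorphism part. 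This scalarity of the leading symbol is the fact that will keep the later Clifford-trace integrals manageable.

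Next I would assemble the symbol of the composite. The product $AD^{-n+1}$ has order $2-n$, so the residue selects $\sigma_{-n}$, i.e. the component two orders below the top, while the outer $c(\bar{w})$ is a zeroth-order endomorphism that only reshuffles the Clifford matrices. I would obtain the symbol of $D^{-n+1}$ from that of $D^{2}$: via the Lichnerowicz formula $D^{2}=\Delta^{S}+\tfrac14 s$ I can expand $\sigma(D^{-2})$, $\sigma(D^{-1})$, and hence $\sigma(D^{-n+1})$, to three homogeneous orders, the curvature entering precisely through the subleading symbol of $D^{2}$. The composition formula $\sigma(PQ)\sim\sum_{\alpha}\frac{1}{\alpha!}\partial_{\xi}^{\alpha}\sigma(P)\,D_{x}^{\alpha}\sigma(Q)$ then expresses $\sigma_{-n}(P)$ at $x_{0}$ as a finite sum of products of the symbols of $A$, of $c(\bar{w})$, and of the three relevant symbols of $D^{-n+1}$, with the accompanying $x$- and $\xi$-derivatives. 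Finally I would take the fibrewise trace and integrate over the cosphere, using $\mathrm{Tr}(\mathrm{id})=2^{m}$, $\mathrm{Tr}(c(\xi)c(\eta))=-2^{m}\langle\xi,\eta\rangle$, its four-factor analogue, and the angular integrals $\int_{S^{n-1}}\xi_a\xi_b\,\sigma(\xi)=\frac{1}{n}\mathrm{vol}(S^{n-1})\delta_{ab}$ and its degree-four version, to convert everything into contractions of the curvature tensor against $\bar{v}_a\bar{w}_b$.

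The hard part will be the curvature bookkeeping in the last two steps. The Ricci and scalar pieces arise from several independent sources — the subleading symbol of $D^{-n+1}$, the $x$-derivatives generated by the composition formula acting on the frame and spin connection, and the zeroth-order term $\sum_{i}c(e_i)c(\nabla_{e_i}\bar{v})$ — and one must verify that, after the Clifford traces and the angular integrations, all spurious non-curvature and non-Ricci terms cancel and the survivors combine, with exactly the right combinatorial weights, into the Einstein tensor $\mathrm{Ric}^{ab}-\tfrac12 s\,g^{ab}$. Matching the accumulated numerical constants against the sphere volume and the trace normalizations should then reproduce the prefactor $\frac{\upsilon_{n-1}}{6}2^{m}$; arranging the synchronous-frame expansion so that only curvature-carrying monomials survive at $x_{0}$ is what makes this final identification feasible.
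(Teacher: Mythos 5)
You should first note that the paper does not actually prove this lemma: it is quoted verbatim from Dabrowski--Sitarz--Zalecki \cite{DL} (their Theorem 4.1) and used as an input, so there is no internal proof to compare against. That said, your strategy is the right one and matches the proof in the cited source: localize the residue density, reduce the anticommutator via the Clifford-module identity $Dc(\bar v)+c(\bar v)D=\sum_i c(e_i)c(\nabla_{e_i}\bar v)-2\nabla^{S}_{v}$, and then run the symbol calculus for $D^{-n+1}$ through the composition formula, Clifford traces, and angular integrals. Your key identity is exactly the one this paper itself establishes and exploits in Proposition 3.1 (equations (3.2)--(3.3)), where the same splitting into the zeroth-order piece $\sum_j c(w)c(e_j)c(\nabla^{TM}_{e_j}v)$ and the first-order piece $-2c(w)\nabla^{S(TM)}_{v}$ organizes the entire boundary computation. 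So the decomposition is sound and the observation that the leading symbol of the anticommutator is scalar is the correct structural reason the trace computation closes.

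The genuine gap is that the decisive step is announced but not performed. The entire content of the lemma is the assertion that, after the composition formula, the Clifford traces, and the degree-two and degree-four angular integrals, the surviving curvature contractions assemble into precisely $Ric^{ab}-\tfrac12 s(g)g^{ab}$ with the coefficient $\tfrac{\upsilon_{n-1}}{6}2^{m}$; your proposal defers exactly this ("one must verify that \dots all spurious \dots terms cancel and the survivors combine, with exactly the right combinatorial weights"). Nothing in the outline would fail, but nothing in it yet certifies the factor $\tfrac16$ or the relative weight $-\tfrac12$ between the Ricci and scalar terms, and these cannot be guessed from structure alone: they come from a cancellation among three independent curvature sources (the subleading symbol of $(D^{2})^{-m}$ via Lichnerowicz, the $\partial_x$-derivatives of the metric produced by the composition formula, and the term $\sum_i c(e_i)c(\nabla_{e_i}\bar v)$) that must be tracked explicitly in normal coordinates. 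As written, the proposal is a correct plan rather than a proof; to complete it you would need to carry out the trace and angular integration bookkeeping to the end, or else simply cite \cite{DL} as the paper does.
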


Let $\bar{v}=\sum_{j=1}^{n}v_{j}e^{j,*} $, $\bar{w}=\sum_{l=1}^{n}w_{l}e^{l,*} $, where $\{e^{1,*},e^{2,*},\cdots,e^{n,*}  \}$
is the orthogonal basis about $g^{TM,*}$. Let $v=\sum_{j=1}^{n}\bar{v}(e_{j })e_{j }=:\sum_{j=1}^{n}v_{j}e_{j} $,
$w =:\sum_{l=1}^{n}w_{l}e_{l} $ be the vector fields dual to one forms $\bar{v},\bar{w} $.
By  definition $c(\bar{v})=c(v) $, $c(\bar{w})=c(w) $ and Lemma 2.2, we get

\begin{lem}
The Einstein functional equal to
 \begin{equation}
Wres\big(c(w)(Dc(v)+c(v)D)D^{-n+1}\big)=\frac{\upsilon_{n-1}}{6}2^{m}\int_{M}(Ric(v,w)
-\frac{1}{2}s g(v,w)) {\rm  vol}_{g},
\end{equation}
where $g(v,w)$ denotes the inner product evaluated on the two vector fields.
\end{lem}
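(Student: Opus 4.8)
The plan is to treat Lemma~2.3 as a direct reformulation of Lemma~2.2 obtained by passing from the differential one-forms $\bar{v},\bar{w}$ to their metric-dual vector fields $v,w$ via the musical isomorphisms. Since the right-hand side of Lemma~2.2 is written as the contraction of the Einstein tensor $G = Ric - \frac{1}{2}s(g)g$ with the coframe components of $\bar{v}$ and $\bar{w}$, while the target in Lemma~2.3 is the same tensor evaluated on the dual vector fields, the argument splits cleanly into an operator-theoretic identification of the two noncommutative residues and a purely geometric identification of the two bilinear expressions.

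First I would dispose of the operator side. By the stated definition $c(\bar{v}) = c(v)$ and $c(\bar{w}) = c(w)$, so Clifford multiplication by a one-form agrees with Clifford multiplication by its dual vector field. Consequently the two operators $c(\bar{w})(Dc(\bar{v}) + c(\bar{v})D)D^{-n+1}$ and $c(w)(Dc(v) + c(v)D)D^{-n+1}$ coincide as endomorphisms of the spinor bundle $S(TM)$, and hence their noncommutative residues are literally equal. No computation is needed here.

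Next I would translate the right-hand side. Working in the fixed orthonormal frame $\{e_1,\dots,e_n\}$ with dual coframe $\{e^{1,*},\dots,e^{n,*}\}$, the metric components are $g_{ij} = \delta_{ij}$, so raising and lowering indices is trivial and the coframe components $\bar{v}_a$ of the one-form coincide with the frame components $v_a = \bar{v}(e_a)$ of the dual vector field. Under this identification the contractions become
\begin{equation}
Ric^{ab}\bar{v}_a\bar{w}_b = Ric(v,w), \qquad g^{ab}\bar{v}_a\bar{w}_b = g(v,w),
\end{equation}
so that $G(\bar{v},\bar{w}) = Ric(v,w) - \frac{1}{2}s\, g(v,w)$. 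Substituting this into the formula of Lemma~2.2 yields the claimed expression, with the same prefactor $\frac{\upsilon_{n-1}}{6}2^{m}$.

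I do not expect a genuine obstacle, since the result is a change of representation rather than a new computation; the only care required is bookkeeping. The main point to verify is that the index conventions match---in particular that $Ric^{ab}$ in Lemma~2.2 is the fully raised Ricci tensor, and that the Clifford-action convention is compatible with the chosen musical isomorphism---so that the orthonormal frame indeed renders every index manipulation trivial and the scalar curvature $s$ appears with the correct sign and coefficient.
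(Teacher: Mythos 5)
Your proposal is correct and follows essentially the same route as the paper, which likewise derives Lemma~2.3 from Lemma~2.2 by invoking the identifications $c(\bar{v})=c(v)$, $c(\bar{w})=c(w)$ and the coincidence of one-form and vector-field components in the orthonormal frame. You merely spell out the bookkeeping that the paper leaves implicit.
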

Using an explicit formula for  the spectral functionals  of above operator, we can
reformulate the noncommutative residue for manifold $(M, g^{M})$ with boundary $\partial_{ M}$ as follows.
\begin{prop}
For the  type-I operator, the Einstein functional for four dimensional spin  manifolds with boundary defined by
\begin{align}
{\rm{\widetilde{Wres}}}\Big(\pi^+\big(c(w)(Dc(v)+c(v)D)D^{-1}\big)\circ \pi^+(D^{-2})\Big)
=Wres\big(c(w)(Dc(v)+c(v)D)D^{-3}\big)+\int_{\partial M}\Phi,
\end{align}
where
\begin{align}
\label{b15}
\Phi &=\int_{|\xi'|=1}\int^{+\infty}_{-\infty}\sum^{\infty}_{j, k=0}\sum\frac{(-i)^{|\alpha|+j+k+1}}{\alpha!(j+k+1)!}
\times {\rm Tr}_{S(TM)}[\partial^j_{x_n}\partial^\alpha_{\xi'}\partial^k_{\xi_n}\sigma^+_{r}
(c(w)(Dc(v)+c(v)D)D^{-1})(x',0,\xi',\xi_n)
\nonumber\\
&\times\partial^\alpha_{x'}\partial^{j+1}_{\xi_n}\partial^k_{x_n}\sigma_{l}(D^{-2})(x',0,\xi',\xi_n)]{\rm d}\xi_n\sigma(\xi'){\rm d}x',
\end{align}
and the sum is taken over $r+l-k-j-|\alpha|=-3$.
\end{prop}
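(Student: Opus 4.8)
The plan is to exhibit the operator as an element of Boutet de Monvel's algebra $\mathcal{B}$ and to apply the Fedosov-Golse-Leichtnam-Schrohe trace formula of Theorem \ref{th:32}. Put $P_1=c(w)(Dc(v)+c(v)D)D^{-1}$ and $P_2=D^{-2}$. Since $Dc(v)+c(v)D$ is a first order operator and $D^{-1}$ lowers the order by one, $P_1$ is a classical pseudodifferential operator of order $0$, while $P_2$ has order $-2$; both have the transmission property, so $\pi^+P_1$, $\pi^+P_2$ and their composition $A:=\pi^+P_1\circ\pi^+P_2$ lie in $\mathcal{B}$. In the matrix picture of Theorem \ref{th:32} the operator $A$ has vanishing boundary pseudodifferential entry ($S=0$) together with vanishing trace and potential entries, so in the formula for ${\rm{\widetilde{Wres}}}(A)$ only the interior symbol $p$ and the Green symbol $b$ survive.

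First I would isolate the interior contribution. In Boutet de Monvel's calculus the composition $\pi^+P_1\circ\pi^+P_2$ differs from $\pi^+(P_1P_2)$ only by a Green operator, so the interior pseudodifferential part of $A$ is $\pi^+(P_1P_2)$ and its symbol is the full symbol of $P_1P_2=c(w)(Dc(v)+c(v)D)D^{-3}$. Consequently the first integral in Theorem \ref{th:32} extracts the homogeneous component of order $-n=-4$ and reproduces precisely the closed-manifold residue $Wres\big(c(w)(Dc(v)+c(v)D)D^{-3}\big)$, which is the quantity computed in Lemma 2.3 specialised to $n=4$. This accounts for the first term on the right-hand side.

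Next I would compute the boundary contribution. With $S=0$ the second integral in Theorem \ref{th:32} reduces to $2\pi\int_{\partial M}\int_{\bf S'}{\rm Tr}_E[({\rm Tr}\,b_{-n})(x',\xi')]\sigma(\xi'){\rm d}x'$, so the whole boundary term is governed by the order $-n$ part of the Green symbol of $A$. Invoking the near-boundary symbol composition rule of Boutet de Monvel's calculus recorded in Section 2 of \cite{Wa1}, the Green symbol is produced by the discrepancy $\pi^+P_1\circ\pi^+P_2-\pi^+(P_1P_2)$; expanding this discrepancy by Taylor expansion in $x_n$ at the boundary together with the Leibniz rule in $(\xi',\xi_n)$, and using the contour-integral representation of $\pi^+$ to realise the projection $\sigma^+_r$ in the $\xi_n$ variable, generates the double series in $j,k$ and the multi-index $\alpha$ with the combinatorial weights $\frac{(-i)^{|\alpha|+j+k+1}}{\alpha!(j+k+1)!}$ and the derivative pattern displayed in $\Phi$. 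Carrying out the $\xi_n$-integration over $\mathbf{R}$ and imposing the homogeneity constraint $r+l-k-j-|\alpha|=-n+1=-3$, which selects exactly the relevant order of the Green symbol, yields $\int_{\partial M}\Phi$. Adding the interior and boundary contributions gives the asserted identity.

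The main obstacle is the bookkeeping in the boundary term: one must correctly implement the $\pi^+$ projection in $\xi_n$ on the symbols $\sigma^+_r(P_1)$, track which factor carries the extra $\partial^{j+1}_{\xi_n}$ and $\partial^k_{x_n}$ derivatives, and verify that the constraint $r+l-k-j-|\alpha|=-3$ isolates precisely the order $-n$ part. The transmission property of $P_1$ and $P_2$ and the analyticity built into $H^+$ guarantee that the $\xi_n$-integrals converge and that the resulting series is the genuine symbol of the Green remainder rather than a purely formal expression.
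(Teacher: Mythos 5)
Your proposal is correct and follows essentially the same route the paper relies on: the paper states this proposition without proof, as the specialization to $P_1=c(w)(Dc(v)+c(v)D)D^{-1}$, $P_2=D^{-2}$, $n=4$ of the general decomposition $\widetilde{\rm Wres}(\pi^+P_1\circ\pi^+P_2)={\rm Wres}(P_1P_2)+\int_{\partial M}\Phi$ from Boutet de Monvel's calculus and the Fedosov--Golse--Leichtnam--Schrohe trace (Theorem 2.1), developed in \cite{Wa1,Wa3}. The only soft spot is that you assert rather than derive the combinatorial weights $\frac{(-i)^{|\alpha|+j+k+1}}{\alpha!(j+k+1)!}$ and the homogeneity constraint $r+l-k-j-|\alpha|=-3$ coming from the normal trace of the leftover Green term, but this is exactly the level of detail at which the paper itself leaves the statement.
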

\begin{prop}
For the  type-II  operator, the Einstein functional for four dimensional spin manifolds with boundary equal to
\begin{align}
\label{b14}
{\rm{\widetilde{Wres}}}\Big(\pi^+\big(c(w)(Dc(v)+c(v)D)D^{-2}\big)\circ \pi^+(D^{-1})\Big)
=Wres\big(c(w)(Dc(v)+c(v)D)D^{-3}\big)+\int_{\partial M}\widetilde{\Phi},
\end{align}
where
\begin{align}
\label{b15}
\widetilde{\Phi} &=\int_{|\xi'|=1}\int^{+\infty}_{-\infty}\sum^{\infty}_{j, k=0}\sum\frac{(-i)^{|\alpha|+j+k+1}}{\alpha!(j+k+1)!}
\times {\rm Tr}_{S(TM)}[\partial^j_{x_n}\partial^\alpha_{\xi'}\partial^k_{\xi_n}\sigma^+_{r}
(c(w)(Dc(v)+c(v)D)D^{-2})(x',0,\xi',\xi_n)
\nonumber\\
&\times\partial^\alpha_{x'}\partial^{j+1}_{\xi_n}\partial^k_{x_n}\sigma_{l}(D^{-1})(x',0,\xi',\xi_n)]{\rm d}\xi_n\sigma(\xi'){\rm d}x',
\end{align}
and the sum is taken over $r+l-k-j-|\alpha|=-3$.
\end{prop}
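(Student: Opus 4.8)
The plan is to realize $A := \pi^+\big(c(w)(Dc(v)+c(v)D)D^{-2}\big)\circ \pi^+(D^{-1})$ as an element of Boutet de Monvel's algebra $\mathcal{B}$ and to apply the trace formula of Theorem \ref{th:32}. Set $P_1 = c(w)(Dc(v)+c(v)D)D^{-2}$ and $P_2 = D^{-1}$. Since $c(w)$ and $c(v)$ are of order $0$, the combination $Dc(v)+c(v)D$ is of order $1$, and $D^{-2},D^{-1}$ lower the order by $2$ and $1$, both $P_1$ and $P_2$ are classical pseudodifferential operators of order $-1$ with the transmission property. Their product $P_1P_2 = c(w)(Dc(v)+c(v)D)D^{-3}$ has order $-2$; as ${\rm dim}\,M = n = 4$, the noncommutative residue extracts the symbol component homogeneous of degree $-n=-4$.

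First I would separate the interior and boundary contributions. Using the composition rule in $\mathcal{B}$ one writes $\pi^+P_1\circ\pi^+P_2 = \pi^+(P_1P_2)+G$, where $G$ is a Green (singular) operator supported at the boundary. The interior symbol of $A$ coincides with that of $P_1P_2$, because $\pi^+$ and the Green correction alter only the boundary data and not the homogeneous interior symbol. Hence the first summand in Theorem \ref{th:32}, namely $\int_X\int_{\mathbf{S}}\mathrm{Tr}_E[p_{-n}(x,\xi)]\sigma(\xi)\,dx$, reproduces precisely $Wres\big(c(w)(Dc(v)+c(v)D)D^{-3}\big)$, the first term on the right-hand side. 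Since the lower-right entry $S$ of $A$ vanishes, the $s_{1-n}$ term of Theorem \ref{th:32} does not appear.

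Next I would compute the boundary term $\int_{\partial M}\widetilde{\Phi}$, which is the contribution $2\pi\int_{\partial M}\int_{\mathbf{S}'}\mathrm{Tr}_E[(\mathrm{Tr}\,b_{-n})(x',\xi')]\sigma(\xi')\,dx'$ coming from the Green symbol $b$ of $G$. The symbol of $G$ is produced by the standard composition expansion for projected operators: applying $\pi^+_{\xi_n}$ (realized through the Cauchy/contour formula for $\pi^+$) to the symbol of $P_1$ and Taylor-expanding in the boundary-normal variable against the symbol of $P_2$ yields an asymptotic series whose coefficients carry the factors $\frac{(-i)^{|\alpha|+j+k+1}}{\alpha!(j+k+1)!}$ together with the paired derivatives $\partial^j_{x_n}\partial^\alpha_{\xi'}\partial^k_{\xi_n}\sigma^+_{r}(P_1)$ and $\partial^\alpha_{x'}\partial^{j+1}_{\xi_n}\partial^k_{x_n}\sigma_{l}(P_2)$, where $\sigma^+_r := \pi^+_{\xi_n}\sigma_r$. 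Selecting the component homogeneous of degree $1-n=-3$ imposes the summation constraint $r+l-k-j-|\alpha|=-3$; taking the fiber trace $\mathrm{Tr}_{S(TM)}$, integrating $\xi_n$ over $\mathbf{R}$ and $\xi'$ over the unit cosphere, and integrating over $\partial M$ then gives exactly $\widetilde{\Phi}$.

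The main obstacle I anticipate is the bookkeeping in the Green-symbol expansion: correctly deriving the combinatorial coefficient $\frac{(-i)^{|\alpha|+j+k+1}}{\alpha!(j+k+1)!}$ and the asymmetric derivative pattern (a single extra $\xi_n$-derivative acting on the $P_2$ factor, reflecting the residue picked up in the $\pi^+$ contour integral), and then isolating the correct homogeneity-$(-3)$ piece through the degree constraint. The remaining steps—identifying orders, matching the interior integral to $Wres(P_1P_2)$, and invoking $S=0$—are structural and follow directly from Theorem \ref{th:32} and the composition calculus of $\mathcal{B}$. The type-I Proposition is treated identically, with $P_1 = c(w)(Dc(v)+c(v)D)D^{-1}$ and $P_2 = D^{-2}$, giving $\Phi$ with the analogous symbol pairing.
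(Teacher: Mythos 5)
Your proposal is correct and follows essentially the same route as the paper: Proposition 2.5 is stated there without a new proof, as a direct instance of the decomposition $\pi^+P_1\circ\pi^+P_2=\pi^+(P_1P_2)+(\text{leftover singular Green term})$ combined with the Fedosov--Golse--Leichtnam--Schrohe trace formula of Theorem 2.1, exactly as you describe. The combinatorial coefficient $\frac{(-i)^{|\alpha|+j+k+1}}{\alpha!(j+k+1)!}$ and the asymmetric derivative pattern that you flag as the remaining obstacle are not rederived in this paper either; they are imported verbatim from the composition/leftover-term calculus for Boutet de Monvel's algebra worked out in \cite{Wa1,Wa3}.
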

\section{Residue for type-I operator}
The aim of this section is to prove the following proposition and get the Dabrowski-Sitarz-Zalecki type theorems.
\begin{prop}
For the Laplace type-I operator, the Einstein functional for four dimensional spin  manifolds with boundary defined by
\begin{align}
{\rm{\widetilde{Wres}}}\Big(\pi^+\big(c(w)(Dc(v)+c(v)D)D^{-1}\big)\circ \pi^+(D^{-2})\Big)
=&{\rm{\widetilde{Wres}}}\Big(\pi^+\big(c(w)c(e_{j})c(\nabla^{ TM}_{e_{j}}v)D^{-1}\big)\circ \pi^+(D^{-2})\Big)\nonumber\\
&+{\rm{\widetilde{Wres}}}\Big(\pi^+\big(-2c(w)\nabla^{ S(TM)}_{v}D^{-1}\big)\circ \pi^+(D^{-2})\Big).
\end{align}
\end{prop}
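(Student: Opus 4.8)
The plan is to reduce the Proposition to a single pointwise operator identity and then invoke the linearity of the boundary residue functional. The crucial step is to establish the Lichnerowicz-type anticommutator identity
\begin{equation}
Dc(v)+c(v)D=c(e_{j})c(\nabla^{TM}_{e_{j}}v)-2\nabla^{S(TM)}_{v},
\end{equation}
where repeated indices are summed and $\nabla^{S(TM)}$ is the spin connection entering the Dirac operator as $D=\sum_{i}c(e_{i})\nabla^{S(TM)}_{e_{i}}$, with $\nabla^{S(TM)}_{e_{i}}=e_{i}-\frac{1}{4}\sum_{s,t}\omega_{s,t}(e_{i})c(e_{s})c(e_{t})$ read off from the definition of $D$ given above.

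First I would verify this identity by a direct computation on a section $\psi$. Applying the Leibniz rule gives
\begin{equation}
D(c(v)\psi)=\sum_{i}c(e_{i})\big[(\nabla^{S(TM)}_{e_{i}}c(v))\psi+c(v)\nabla^{S(TM)}_{e_{i}}\psi\big].
\end{equation}
Using the compatibility of the spin connection with Clifford multiplication, $\nabla^{S(TM)}_{e_{i}}c(v)=c(\nabla^{TM}_{e_{i}}v)$, the first summand becomes $c(e_{i})c(\nabla^{TM}_{e_{i}}v)\psi$. Adding $c(v)D\psi=\sum_{i}c(v)c(e_{i})\nabla^{S(TM)}_{e_{i}}\psi$ and collecting the first-order terms, the Clifford relation $c(e_{i})c(v)+c(v)c(e_{i})=-2g(e_{i},v)$ together with $v=\sum_{j}v_{j}e_{j}$ yields $-2\sum_{i}g(e_{i},v)\nabla^{S(TM)}_{e_{i}}=-2\nabla^{S(TM)}_{v}$, which is exactly the stated identity.

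Multiplying on the left by $c(w)$ and on the right by $D^{-1}$ produces the corresponding identity for the operator appearing in the Proposition,
\begin{equation}
c(w)(Dc(v)+c(v)D)D^{-1}=c(w)c(e_{j})c(\nabla^{TM}_{e_{j}}v)D^{-1}-2c(w)\nabla^{S(TM)}_{v}D^{-1}.
\end{equation}
Since $\pi^{+}$ is a linear projection it distributes over this sum, composition with $\pi^{+}(D^{-2})$ is bilinear and hence distributes as well, and by Theorem \ref{th:32} the functional ${\rm{\widetilde{Wres}}}$ is a continuous trace and in particular linear. Applying these three linearities in turn splits the left-hand side into the two summands asserted in the Proposition.

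I do not anticipate a genuine obstacle, since the statement amounts to the substitution of an algebraic identity followed by linearity. The only point requiring care is the compatibility relation $\nabla^{S(TM)}_{e_{i}}c(v)=c(\nabla^{TM}_{e_{i}}v)$: it must be read with $v$ the vector field dual to the one-form $\bar{v}$, so that $\nabla^{TM}_{e_{i}}v$ is well defined, and it is precisely the torsion-free Levi-Civita connection $\nabla^{L}$ used in the definition of $D$ that makes it hold. One should also note that the trailing factor $D^{-1}$ does not interfere with the projections $\pi^{+}$ and the composition, because every operation used is linear in the operator argument, so no reordering past $\pi^{+}$ or $D^{-2}$ is ever required.
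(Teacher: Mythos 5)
Your proposal is correct and follows essentially the same route as the paper: both establish the anticommutator identity $Dc(v)+c(v)D=\sum_{j}c(e_{j})c(\nabla^{TM}_{e_{j}}v)-2\nabla^{S(TM)}_{v}$ from the Clifford-connection compatibility $[\nabla^{S(TM)}_{e_{j}},c(v)]=c(\nabla^{TM}_{e_{j}}v)$ and the relation $c(e_{j})c(v)+c(v)c(e_{j})=-2g(e_{j},v)$, and then conclude by multiplying by $c(w)$ and $D^{-1}$ and using linearity of $\pi^{+}$ and $\rm{\widetilde{Wres}}$. The only cosmetic difference is that you apply the Leibniz rule to $D(c(v)\psi)$ directly, whereas the paper first writes $Dc(v)=c(v)D+[D,c(v)]$ and expands the commutator; the underlying computation is identical.
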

\begin{proof}
Using the proposition of Clifford connections, we see that the left hand side of (3.1) equals
\begin{align}
Dc(v)&=c(v)D+[D,c(v)]\nonumber\\
=&c(v)D+\Big[\sum_{j=1}^{n}c(e_{j})\nabla^{S(TM)}_{e_{j}},c(v)\Big]\nonumber\\
                     =&c(v)D+\sum_{j=1}^{n}c(e_{j})\nabla^{S(TM)}_{e_{j}}c(v)-c(v)\sum_{j=1}^{n}c(e_{j})\nabla^{S(TM)}_{e_{j}}\nonumber\\
                     =&c(v)D+\sum_{j=1}^{n}c(e_{j})\Big[\nabla^{S(TM)}_{e_{j}},c(v)\Big]+\sum_{j=1}^{n}c(e_{j})c(v)\nabla^{ S(TM)}_{e_{j}}
                           -c(v)\sum_{j=1}^{n}c(e_{j})\nabla^{S(TM)}_{e_{j}}\nonumber\\
                      =&c(v)D+\sum_{j=1}^{n}c(e_{j})\Big[\nabla^{S(TM)}_{e_{j}},c(v)\Big]
                      -c(v)\sum_{j=1}^{n}c(e_{j})\nabla^{S(TM)}_{e_{j}}\nonumber\\
                                            &+\sum_{j=1}^{n}\big(-c(v)c(e_{j})-2g(e_{j},v)\big)\nabla^{S(TM)}_{e_{j}}\nonumber\\
                        =&c(v)D+\sum_{j=1}^{n}c(e_{j})c(\nabla^{TM}_{e_{j}}v)-2c(v)D-2\sum_{j=1}^{n}g(e_{j},v)\nabla^{ S(TM)}_{e_{j}}\nonumber\\
                     =&\sum_{j=1}^{n}c(e_{j})c(\nabla^{TM}_{e_{j}}v)-c(v)D-2\nabla^{ S(TM)}_{v}.
\end{align}
An easy calculation gives
\begin{align}
c(w)\big(Dc(v)+c(v)D\big)D^{-1}&=c(w)Dc(v)D^{-1}+c(w)c(v)DD^{-1}\nonumber\\
  &=c(w)Dc(v)D^{-1}+c(w)c(v)\nonumber\\
  &=c(w)\Big(c(v)D+\sum_{j=1}^{n}c(e_{j})c(\nabla^{TM}_{e_{j}}v)-c(v)D-2\nabla^{ S(TM)}_{v}\Big)D^{-1}\nonumber\\
               &=\sum_{j=1}^{n}c(w)c(e_{j})c(\nabla^{TM}_{e_{j}}v)D^{-1}-2c(w)\nabla^{ S(TM)}_{v}D^{-1},
\end{align}
 and the proof of the Proposition is complete.
\end{proof}
Combining with the generating Proposition 3.1, this yields
\begin{align}
&{\rm{\widetilde{Wres}}}\Big(\pi^+\big(\sum_{j=1}^{n}c(w)c(e_{j})c(\nabla^{ TM}_{e_{j}}v)D^{-1}\big)\circ \pi^+(D^{-2})\Big)\nonumber\\
=&Wres\Big( \sum_{j=1}^{n}c(w)c(e_{j})c(\nabla^{ TM}_{e_{j}}v)D^{-3} \Big)+\int_{\partial M}\Phi^{*},
\end{align}
and
\begin{align}
 {\rm{\widetilde{Wres}}}\Big(\pi^+\big(-2c(w)\nabla^{ S(TM)}_{v}D^{-1}\big)\circ \pi^+(D^{-2})\Big)
= Wres\Big( -2c(w)\nabla^{ S(TM)}_{v}D^{-3} \Big)+\int_{\partial M}\widetilde{\Phi^{*}}.
\end{align}

Since $\Phi^{*},\widetilde{\Phi^{*}}$ is a global form on $\partial M$, so for any
fixed point $x_0\in\partial M$, we can choose the normal coordinates
$U$ of $x_0$ in $\partial M$ (not in $M$) and compute $\Phi^{*}(x_0),\widetilde{\Phi^{*}}(x_0)$ in
the coordinates $\widetilde{U}=U\times [0,1)\subset M$ and the
metric $\frac{1}{h(x_n)}g^{\partial M}+dx_n^2.$ The dual metric of
$g^M$ on $\widetilde{U}$ is ${h(x_n)}g^{\partial M}+dx_n^2.$  Write
$g^M_{ij}=g^M(\frac{\partial}{\partial x_i},\frac{\partial}{\partial
x_j});~ g_M^{ij}=g^M(dx_i,dx_j)$, then
$$[g^M_{i,j}]= \left[\begin{array}{lcr}
  \frac{1}{h(x_n)}[g_{i,j}^{\partial M}]  & 0  \\
   0  &  1
\end{array}\right];~~~
[g_M^{i,j}]= \left[\begin{array}{lcr}
  h(x_n)[g^{i,j}_{\partial M}]  & 0  \\
   0  &  1
\end{array}\right], $$
and
$$\partial_{x_s}g_{ij}^{\partial M}(x_0)=0, 1\leq i,j\leq
n-1; ~~~g_{ij}^M(x_0)=\delta_{ij}. $$

 Let $n=4$
and $\{e_1,\cdots,e_{n-1}\}$ be an orthonormal frame field in $U$
about $g^{\partial M}$ which is parallel along geodesics and
$e_i(x_0)=\frac{\partial}{\partial x_i}(x_0)$, then
$\{\widetilde{e_1}=\sqrt{h(x_n)}e_1,\cdots,\widetilde{e_{n-1}}=\sqrt{h(x_n)}e_{n-1},
\widetilde{e_n}=dx_n\}$ is the orthonormal frame field in
$\widetilde U$ about $g^M$. Locally $S(TM)|_{\widetilde {U}}\cong
\widetilde {U}\times\wedge^* _{\bf C}(\frac{n}{2}).$ Let
$\{f_1,\cdots,f_4\}$ be the orthonormal basis of $\wedge^* _{\bf
C}(\frac{n}{2}).$ Take a spin frame field $\sigma:~\widetilde
{U}\rightarrow {\rm Spin}(M)$ such that $\pi\sigma=
\{\widetilde{e_1},\cdots,\widetilde{e_n}\}$ where $\pi :~{\rm
Spin}(M)\rightarrow O(M)$ is a double covering, then
$\{[(\sigma,f_i)],~1\leq i\leq 4\}$ is an orthonormal frame of
$S(TM)|_{\widetilde {U}}.$ In the following, since the global form
$\Phi$ is independent of the choice of the local frame, so we can
compute ${\rm tr}_{S(TM)}$ in the frame $\{[(\sigma,f_i)],~1\leq
i\leq 4\}.$ Let $\{E_1,\cdots,E_n\}$ be the canonical basis of ${\bf
R}^n$ and $c(E_i)\in {\rm cl}_{\bf C}(n)\cong {\rm Hom}(\wedge^*
_{\bf C}(\frac{n}{2}),\wedge^* _{\bf C}(\frac{n}{2}))$ be the
Clifford action, then
$$c( \widetilde{e_i})=[(\sigma,c(E_i))];~ c( \widetilde{e_i})[(\sigma,f_i)]=[(\sigma,c(E_i)f_i)];~
\frac{\partial}{\partial x_i}=[(\sigma,\frac{\partial}{\partial
x_i})], $$
then we have $\frac{\partial}{\partial x_i}c( \widetilde{e_i})=0$ in the above frame.

\subsection{The computation of $\Phi^{*}$}
\begin{lem}\cite{Wa3,Wa4} The following identities hold:
\begin{align}
&\sigma_{-1}(D^{-1})=\frac{\sqrt{-1}c(\xi)}{|\xi|^2};\\
&\sigma_{-2}(D^{-2})=|\xi|^{-2}.
\end{align}
\end{lem}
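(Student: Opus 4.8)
The plan is to reduce both identities to the principal symbol of the Dirac operator together with the single Clifford relation $c(\xi)^2=-|\xi|^2$, and then to invoke the standard fact that the leading symbol of a parametrix is the pointwise inverse of the leading symbol of the operator. First I would read off $\sigma_1(D)$ from the definition of $D$. Only the first-order part $\sum_{i=1}^n c(e_i)e_i$ contributes to the principal symbol, the connection term $-\tfrac14\sum_{s,t}\omega_{s,t}(e_i)c(e_s)c(e_t)$ being of order zero; with the convention $\sigma(\partial_{x_j})=\sqrt{-1}\,\xi_j$ this gives $\sigma_1(D)=\sqrt{-1}\sum_i\xi_i c(e_i)=\sqrt{-1}\,c(\xi)$.

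Next I would square. Since the leading symbol of a composition is the product of the leading symbols, $\sigma_2(D^2)=[\sigma_1(D)]^2=(\sqrt{-1})^2c(\xi)^2=-c(\xi)^2$. The Clifford relation $c(e_i)c(e_j)+c(e_j)c(e_i)=-2g(e_i,e_j)$ yields $c(\xi)^2=-|\xi|^2$, hence $\sigma_2(D^2)=|\xi|^2$; equivalently one may quote the Lichnerowicz formula, whose principal part is the scalar Laplacian.

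Finally I would invert. For an elliptic operator of order $m$ with principal symbol $p_m$, the parametrix has principal symbol $p_m^{-1}$, which is the leading term of the recursive symbol-inversion formula and the only term needed here. Applied to $D^2$ this gives $\sigma_{-2}(D^{-2})=(|\xi|^2)^{-1}=|\xi|^{-2}$. Applied to $D$ it gives $\sigma_{-1}(D^{-1})=[\sqrt{-1}\,c(\xi)]^{-1}$; writing $c(\xi)^{-1}=c(\xi)/c(\xi)^2=-c(\xi)/|\xi|^2$ and using $(\sqrt{-1})^{-1}=-\sqrt{-1}$, one obtains $[\sqrt{-1}\,c(\xi)]^{-1}=(-\sqrt{-1})\big(-c(\xi)/|\xi|^2\big)=\sqrt{-1}\,c(\xi)/|\xi|^2$, the claimed expression.

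There is no real obstacle: both are leading-symbol statements, so no lower-order symbol-composition terms enter and the computation collapses to elementary Clifford algebra. The only point requiring care is the bookkeeping of the factors of $\sqrt{-1}$ and the sign in the Clifford relation, since a slip there would flip the final formula.
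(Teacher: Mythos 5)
Your argument is correct and is essentially the computation behind the quoted result: the paper itself only cites Lemma 3.2 from \cite{Wa3,Wa4} without proof, but the same recursive symbol inversion you invoke ($q_{-1}=p_1^{-1}$ from the composition formula) is displayed verbatim in the proof of Lemma 3.4, and with $\sigma_1(D)=\sqrt{-1}\,c(\xi)$ and $c(\xi)^2=-|\xi|^2$ one has $p_1\cdot p_1=|\xi|^2$, hence $p_1^{-1}=p_1/|\xi|^2=\sqrt{-1}\,c(\xi)/|\xi|^2$ and $\sigma_{-2}(D^{-2})=(\sigma_2(D^2))^{-1}=|\xi|^{-2}$, exactly as you compute. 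The only point worth flagging is that your convention $\sigma(\partial_{x_j})=\sqrt{-1}\,\xi_j$ is the one that makes the stated signs come out right, whereas the paper writes $\sigma(\partial_{x_j})=-\sqrt{-1}\,\xi_j$ just before Lemma 3.4; your bookkeeping is the consistent one.
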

\begin{lem} The following identities hold:
\begin{align}
&{\rm Tr} \Big(\sum_{j=1}^{n}c(w)c(e_{j})c(\nabla^{ TM}_{e_{j}}v)c(dx_{n})\Big)(x_{0})\nonumber\\
=&\Big(\sum_{j=1}^{n}g(e_{j},\nabla^{ TM}_{e_{j}}v)g(w,e_{n})-g(w,\nabla^{ TM}_{e_{n}}v)
+g(\nabla^{ TM}_{w}v, e_{n}) \Big){\rm Tr}_{S(TM)}[{\rm id}].
\end{align}
\end{lem}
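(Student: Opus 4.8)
The plan is to reduce the left-hand side to the elementary trace formula for a product of four Clifford generators and then contract indices. First I would work at the fixed point $x_0$, where the frame is normal and $g^M_{ij}(x_0)=\delta_{ij}$, so that $c(dx_n)=c(e_n)$ there and the Clifford relations take the flat form $c(e_i)c(e_j)+c(e_j)c(e_i)=-2\delta_{ij}$. Expanding $w=\sum_b g(w,e_b)e_b$ and $\nabla^{TM}_{e_j}v=\sum_c g(e_c,\nabla^{TM}_{e_j}v)e_c$ in the orthonormal frame turns the summand into a linear combination of the monomials $c(e_b)c(e_j)c(e_c)c(e_n)$ with scalar coefficients $g(w,e_b)\,g(e_c,\nabla^{TM}_{e_j}v)$.

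The key algebraic input is the standard identity
\[
\mathrm{Tr}\big[c(e_a)c(e_b)c(e_c)c(e_d)\big]=\big(\delta_{ab}\delta_{cd}-\delta_{ac}\delta_{bd}+\delta_{ad}\delta_{bc}\big)\mathrm{Tr}_{S(TM)}[\mathrm{id}],
\]
which I would recall (or re-derive) by repeatedly using the anticommutation relation to carry one generator past the others, together with cyclicity of the trace and the two-generator identity $\mathrm{Tr}[c(e_i)c(e_j)]=-\delta_{ij}\mathrm{Tr}_{S(TM)}[\mathrm{id}]$; the odd-degree traces vanish, so only the even product contributes.

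Specializing the four-generator formula to the index pattern $(b,j,c,n)$ produces three Kronecker-delta terms. Contracting each against $g(w,e_b)\,g(e_c,\nabla^{TM}_{e_j}v)$ and summing over $j,b,c$ yields: from $\delta_{bj}\delta_{cn}$ the quantity $g(\nabla^{TM}_{w}v,e_n)$, after using $\sum_j g(w,e_j)\nabla^{TM}_{e_j}=\nabla^{TM}_{w}$; from $\delta_{bc}\delta_{jn}$ the quantity $-g(w,\nabla^{TM}_{e_n}v)$; and from $\delta_{bn}\delta_{jc}$ the quantity $g(w,e_n)\sum_j g(e_j,\nabla^{TM}_{e_j}v)$. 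Adding the three contributions reproduces the stated right-hand side.

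The computation is essentially routine; the only points demanding care are the sign convention in the four-generator trace formula and the correct reindexing of the contracted sums, in particular the identification $\sum_j g(w,e_j)\nabla^{TM}_{e_j}=\nabla^{TM}_{w}$ that turns a frame sum into the covariant derivative along $w$. No global or analytic input is required, since at $x_0$ the whole identity collapses to Clifford algebra over $\R^n$.
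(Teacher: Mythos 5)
Your proof is correct and follows essentially the same route as the paper: both reduce the claim to Clifford anticommutation plus cyclicity of the trace at the point $x_0$, where $c(dx_n)=c(e_n)$ and the frame is orthonormal. The only difference is organizational --- you invoke the closed-form four-generator identity $\mathrm{Tr}\big[c(e_a)c(e_b)c(e_c)c(e_d)\big]=(\delta_{ab}\delta_{cd}-\delta_{ac}\delta_{bd}+\delta_{ad}\delta_{bc})\,\mathrm{Tr}_{S(TM)}[\mathrm{id}]$ and contract indices, whereas the paper derives the same identity inline by repeatedly anticommuting $c(w)$ through the product and then solving for the original trace (its ``making the term of shift'' step); your three contractions $g(\nabla^{TM}_{w}v,e_n)$, $-g(w,\nabla^{TM}_{e_n}v)$ and $g(w,e_n)\sum_j g(e_j,\nabla^{TM}_{e_j}v)$ and their signs all check out.
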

\begin{proof}
By using inner product formulas, we obtain
\begin{align}
&{\rm Tr} \Big(\sum_{j=1}^{n}c(w)c(e_{j})c(\nabla^{ TM}_{e_{j}}v)c(dx_{n})\Big)(x_{0})\nonumber\\
=&\sum_{j=1}^{n}{\rm Tr} \Big(-c(e_{j})c(w)c(\nabla^{ TM}_{e_{j}}v)c(dx_{n})\Big)(x_{0})
-2\sum_{j=1}^{n}g(e_{j},w){\rm Tr}\Big(c(\nabla^{ TM}_{e_{j}}v)c(dx_{n})\Big)\nonumber\\
=&\sum_{j=1}^{n}{\rm Tr} \Big(c(e_{j})c(\nabla^{ TM}_{e_{j}}v)c(w)c(dx_{n})\Big)(x_{0})
+2\sum_{j=1}^{n}g(w,\nabla^{ TM}_{e_{j}}v){\rm Tr}\Big(c(e_{j})c(dx_{n})\Big)\nonumber\\
&-2\sum_{j=1}^{n}g(e_{j},w){\rm Tr}\Big(c(\nabla^{ TM}_{e_{j}}v)c(dx_{n})\Big)\nonumber\\
=&\sum_{j=1}^{n}{\rm Tr} \Big(-c(e_{j})c(\nabla^{ TM}_{e_{j}}v)c(dx_{n})c(w)\Big)(x_{0})
-2\sum_{j=1}^{n}g(w,dx_{n}){\rm Tr}\Big(c(e_{j})\nabla^{ TM}_{e_{j}}v\Big)\nonumber\\
&+2\sum_{j=1}^{n}g(w,\nabla^{ TM}_{e_{j}}v){\rm Tr}\Big(c(e_{j})c(dx_{n})\Big)(x_{0})
-2\sum_{j=1}^{n}g(e_{j},w){\rm Tr}\Big(c(\nabla^{ TM}_{e_{j}}v)c(dx_{n})\Big).
\end{align}
Then making the term of shift, and the proof of the lemma is complete.
\end{proof}

Now we can compute $\Phi^{*}$, since the sum is taken over $
-r-l+k+j+|\alpha|=3,~~r\leq-1,l\leq-2$, we get $r=-1,l=-2,~k=|\alpha|=j=0,$ then

\begin{align}
\Phi^{*}=-i\int_{|\xi'|=1}\int^{+\infty}_{-\infty}{\rm Tr} \Big[\pi^+_{\xi_n}\sigma_{-1}
\Big(\sum_{j=1}^{n}c(w)c(e_{j})c(\nabla^{ TM}_{e_{j}}v)D^{-1}\Big)\times
\partial_{\xi_n}\sigma_{-2}(D^{-2})\Big](x_0){\rm d}\xi_n\sigma(\xi'){\rm d}x'.
\end{align}
An easy calculation gives
\begin{align}
\pi^+_{\xi_n}\sigma_{-1}\Big(\sum_{j=1}^{n}c(w)c(e_{j})c(\nabla^{ TM}_{e_{j}}v)D^{-1}\Big)
=\sum_{j=1}^{n}c(w)c(e_{j})c(\nabla^{ TM}_{e_{j}}v)\frac{c(\xi')+ic(dx_n)}{2(\xi_n-i)},
\end{align}
and
\begin{align}
\partial_{\xi_n}\sigma_{-2}(D^{-2})=\frac{-2\xi_n}{(1+\xi_n^2)^2}.
\end{align}
It follows that
\begin{align}
&{\rm Tr} \Big[\pi^+_{\xi_n}\sigma_{-1}\Big(\sum_{j=1}^{n}c(w)c(e_{j})c(\nabla^{ TM}_{e_{j}}v)D^{-1}\Big)\times
\partial_{\xi_n}\sigma_{-2}(D^{-2})\Big](x_0)\nonumber\\
=& \frac{-\xi_n}{(\xi_n-i)^3(\xi_n+i)^2}{\rm Tr} \Big(\sum_{j=1}^{n}c(w)c(e_{j})c(\nabla^{ TM}_{e_{j}}v)c(\xi')\Big)(x_0)\nonumber\\
&-\frac{i\xi_n}{(\xi_n-i)^3(\xi_n+i)^2} {\rm Tr} \Big(\sum_{j=1}^{n}c(w)c(e_{j})c(\nabla^{ TM}_{e_{j}}v)c(dx_{n})\Big)(x_0).
\end{align}
From (3.12),(3.13) and Lemma 3.3, we obtain
\begin{align}
\Phi^{*}=&-i\int_{|\xi'|=1}\int^{+\infty}_{-\infty} \Big[-\frac{i\xi_n}{(\xi_n-i)^3(\xi_n+i)^2} {\rm Tr}
 \Big(\sum_{j=1}^{n}c(w)c(e_{j})c(\nabla^{ TM}_{e_{j}}v)c(dx_{n})\Big)\Big](x_0){\rm d}\xi_n\sigma(\xi'){\rm d}x'\nonumber\\
  =&\int_{\Gamma^{+}}-\frac{i\xi_n}{(\xi_n-i)^3(\xi_n+i)^2}{\rm Tr}
 \Big(\sum_{j=1}^{n}c(w)c(e_{j})c(\nabla^{ TM}_{e_{j}}v)c(dx_{n})\Big)\Big](x_0)\Omega_3 {\rm d}x'\nonumber\\
 =&-2\pi^2 \Big(\sum_{j=1}^{n}g(e_{j},\nabla^{ TM}_{e_{j}}v)g(w,e_{n})-g(w,\nabla^{ TM}_{e_{n}}v)+g(\nabla^{ TM}_{w}v, e_{n}) \Big){\rm d}x'.
\end{align}

\subsection{The computation of $\widetilde{\Phi^{*}}$}
Let $v=\sum_{j=1}^nv_j\partial_{x_j}$,
denote the  spin connection
by  $\nabla_v^{S(TM)}:=v+\frac{1}{4}\Sigma_{ij}\langle\nabla_v^L{\tilde{e}_i},\tilde{e}_j\rangle c(\tilde{e}_i)c(\tilde{e}_j)=v+ A(v)$.
By $\sigma(\partial_{x_j})=-\sqrt{-1}\xi_j$, we have the following lemmas.
\begin{lem}\label{lem3} The following identities hold:
\begin{align}
\label{b22}
\sigma_{0}(\nabla_v^{S(TM)}D^{-1})=&\sum_{j=1}^nv_j\sqrt{-1}\xi_{j}\frac{\sqrt{-1}c(\xi)}{|\xi|^2};\\
\sigma_{-1}(\nabla_v^{S(TM)}D^{-1})=&\sigma_{1}(\nabla_v^{S(TM)} )\sigma_{-2}(D^{-1})
+\sigma_{0}(\nabla_v^{S(TM)} )\sigma_{-1}(D^{-1})\nonumber\\
&+\sum_{j=1}^{n}\partial_{\xi_{j}}\sigma_{1}\big(\nabla_v^{S(TM)} \big)
D_{x_{j}}\big(\sigma_{-1}(D^{-1})\big).
\end{align}
\end{lem}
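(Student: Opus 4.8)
The plan is to obtain both identities directly from the Kohn--Nirenberg composition formula for classical pseudodifferential symbols, so the whole argument reduces to decomposing $\nabla_v^{S(TM)}$ into its homogeneous pieces and then bookkeeping which products land in each order. First I would split the operator by differential order. Since $\nabla_v^{S(TM)}=v+A(v)$ with $v=\sum_{j=1}^n v_j\partial_{x_j}$ a first-order operator and $A(v)$ a zeroth-order endomorphism, the total symbol of $\nabla_v^{S(TM)}$ has only the two homogeneous components
\begin{align}
\sigma_1(\nabla_v^{S(TM)})=\sqrt{-1}\sum_{j=1}^n v_j\xi_j,\qquad \sigma_0(\nabla_v^{S(TM)})=A(v),
\end{align}
the principal part being read off from the symbol rule for $\partial_{x_j}$ (taken consistently with the convention $\sigma_1(D)=\sqrt{-1}c(\xi)$ underlying Lemma 3.2). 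For the resolvent factor I would import from Lemma 3.2 the leading symbol $\sigma_{-1}(D^{-1})=\frac{\sqrt{-1}c(\xi)}{|\xi|^2}$ together with the next homogeneous term $\sigma_{-2}(D^{-1})$ of $\sigma(D^{-1})$.

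The key tool is the composition formula
\begin{align}
\sigma\big(P\circ Q\big)\sim\sum_\alpha\frac{1}{\alpha!}\partial_\xi^\alpha\sigma(P)\,D_x^\alpha\sigma(Q),\qquad D_{x_j}=-\sqrt{-1}\,\partial_{x_j},
\end{align}
which I would apply with $P=\nabla_v^{S(TM)}$ and $Q=D^{-1}$. The observation that drives everything is that a single term $\frac{1}{\alpha!}\partial_\xi^\alpha\sigma_p(P)\,D_x^\alpha\sigma_q(Q)$ is homogeneous of degree $p-|\alpha|+q$, so extracting the symbol of a prescribed order amounts to enumerating the admissible triples $(p,q,|\alpha|)$ subject to $p\in\{1,0\}$, $q\le-1$ and $|\alpha|\ge 0$.

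For the order-zero component I would impose $p-|\alpha|+q=0$; these constraints force the unique triple $(1,-1,0)$, yielding $\sigma_0(\nabla_v^{S(TM)}D^{-1})=\sigma_1(\nabla_v^{S(TM)})\sigma_{-1}(D^{-1})$, which becomes the first identity once the symbols above are inserted. For the order $-1$ component I would impose $p-|\alpha|+q=-1$; now exactly three triples survive, namely $(1,-2,0)$, $(0,-1,0)$ and $(1,-1,1)$, and they produce respectively the terms $\sigma_1(\nabla_v^{S(TM)})\sigma_{-2}(D^{-1})$, $\sigma_0(\nabla_v^{S(TM)})\sigma_{-1}(D^{-1})$ and $\sum_{j}\partial_{\xi_j}\sigma_1(\nabla_v^{S(TM)})\,D_{x_j}\big(\sigma_{-1}(D^{-1})\big)$, which is precisely the claimed second identity.

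The computation is essentially routine, and the only place demanding genuine care is the combinatorial bookkeeping: one must verify that the finite lists of admissible $(p,q,|\alpha|)$ are complete and that no higher-$|\alpha|$ contribution slips in, which is guaranteed here because $\sigma(\nabla_v^{S(TM)})$ truncates at order $1$. I expect the only other friction to be fixing the sign and normalization of $\sigma_1(\nabla_v^{S(TM)})$ so that it is compatible with the convention used for $\sigma(D)$ in Lemma 3.2; once that is pinned down, the two displays fall out immediately from the enumeration above.
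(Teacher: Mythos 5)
Your proposal is correct and follows essentially the same route as the paper: both rest on the composition formula $\sigma(P\circ Q)\sim\sum_\alpha\frac{1}{\alpha!}\partial_\xi^\alpha\sigma(P)D_x^\alpha\sigma(Q)$ applied to $\nabla_v^{S(TM)}\circ D^{-1}$ after splitting $\nabla_v^{S(TM)}=v+A(v)$ into its order-$1$ and order-$0$ parts. If anything, your explicit enumeration of the admissible triples $(p,q,|\alpha|)$ is more complete than the paper's argument, which spends most of its effort re-deriving $q_{-1},q_{-2}$ for $D^{-1}$ and then asserts the final identity; your closing remark about pinning down the sign of $\sigma_1(\nabla_v^{S(TM)})$ against the stated convention $\sigma(\partial_{x_j})=-\sqrt{-1}\xi_j$ is well taken, since the paper's displayed first identity uses the opposite sign.
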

\begin{proof}
Write
 \begin{eqnarray}
D_x^{\alpha}&=(-i)^{|\alpha|}\partial_x^{\alpha};
~\sigma(D)=p_1+p_0;
~\sigma(D)^{-1}=\sum^{\infty}_{j=1}q_{-j}.
\end{eqnarray}
 By the composition formula of pseudodifferential operators, we have
\begin{align}
1=\sigma(D\circ D^{-1})&=\sum_{\alpha}\frac{1}{\alpha!}\partial^{\alpha}_{\xi}[\sigma(D)]
D_x^{\alpha}[\sigma(D^{-1})]\nonumber\\
&=(p_1+p_0)(q_{-1}+q_{-2}+q_{-3}+\cdots)\nonumber\\
&~~~+\sum_j(\partial_{\xi_j}p_1+\partial_{\xi_j}p_0)(
D_{x_j}q_{-1}+D_{x_j}q_{-2}+D_{x_j}q_{-3}+\cdots)\nonumber\\
&=p_1q_{-1}+(p_1q_{-2}+p_0q_{-1}+\sum_j\partial_{\xi_j}p_1D_{x_j}q_{-1})+\cdots,
\end{align}
so
\begin{equation}
q_{-1}=p_1^{-1};~q_{-2}=-p_1^{-1}[p_0p_1^{-1}+\sum_j\partial_{\xi_j}p_1D_{x_j}(p_1^{-1})].
\end{equation}
Then,
\begin{align}
\label{b22}
\sigma_{-1}(\nabla_v^{S(TM)}D^{-1})=&\sigma_{1}(\nabla_v^{S(TM)} )\sigma_{-2}(D^{-1})
+\sigma_{0}(\nabla_v^{S(TM)} )\sigma_{-1}(D^{-1})\nonumber\\
&+\sum_{j=1}^{n}\partial_{\xi_{j}}\sigma_{1}\big(\nabla_v^{S(TM)} \big)
D_{x_{j}}\big(\sigma_{-1}(D^{-1})\big).
\end{align}
\end{proof}

\begin{lem}\cite{Wa3,Wa4}The symbol of the Dirac operator
\begin{align}
&\sigma_{-2}(D^{-1})=\frac{c(\xi)\sigma_{0}(D)c(\xi)}{|\xi|^4}+\frac{c(\xi)}{|\xi|^6}\sum_jc(dx_j)
[\partial_{x_j}[c(\xi)]|\xi|^2-c(\xi)\partial_{x_j}(|\xi|^2)];\nonumber\\
&\sigma_{-3}(D^{-2})=-\sqrt{-1}|\xi|^{-4}\xi_k(\Gamma^k-2\delta^k)-\sqrt{-1}|\xi|^{-6}2\xi^j\xi_\alpha\xi_\beta\partial_jg^{\alpha\beta},
\end{align}
where  $\sigma_{0}(D)(x_0)=-\frac{3}{4}h'(0)c(dx_n)$.
\end{lem}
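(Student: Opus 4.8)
The plan is to derive both expansions purely from the parametrix recursion already set up in the proof of Lemma~\ref{lem3}, and then to evaluate the zeroth-order datum $\sigma_0(D)$ at the base point $x_0$ from the warped boundary metric.

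First I would compute $\sigma_{-2}(D^{-1})=q_{-2}$ by substituting the Dirac data into the recursion
\[
q_{-2}=-p_1^{-1}\Big[p_0 p_1^{-1}+\sum_j\partial_{\xi_j}p_1\,D_{x_j}(p_1^{-1})\Big].
\]
Here $p_1=\sqrt{-1}\,c(\xi)$ and $p_0=\sigma_0(D)$, while $p_1^{-1}=\sqrt{-1}\,c(\xi)/|\xi|^2$ follows from $c(\xi)^2=-|\xi|^2$. Using $\partial_{\xi_j}p_1=\sqrt{-1}\,c(dx_j)$ and $D_{x_j}=-\sqrt{-1}\,\partial_{x_j}$, the term $-p_1^{-1}p_0p_1^{-1}$ collapses to $c(\xi)\sigma_0(D)c(\xi)/|\xi|^4$, and the remaining term, after applying the quotient rule to $\partial_{x_j}\big(c(\xi)/|\xi|^2\big)$, collapses to $\frac{c(\xi)}{|\xi|^6}\sum_j c(dx_j)\big[\partial_{x_j}[c(\xi)]|\xi|^2-c(\xi)\partial_{x_j}(|\xi|^2)\big]$. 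This is a mechanical substitution with routine Clifford simplification and presents no real obstacle.

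For $\sigma_{-3}(D^{-2})$ I would instead work with $D^2$ as a Laplace-type operator, writing $\sigma(D^2)=p_2+p_1+p_0$ with $p_2=|\xi|^2$ and $p_1=\sqrt{-1}\,\xi_k(\Gamma^k-2\delta^k)$ the standard first-order datum carrying the contracted Christoffel symbols. Matching orders in $\sigma(D^2\circ D^{-2})=1$ reproduces $q_{-2}=|\xi|^{-2}$ and yields
\[
q_{-3}=-p_2^{-1}\Big[p_1 q_{-2}+\sum_j\partial_{\xi_j}p_2\,D_{x_j}q_{-2}\Big].
\]
With $\partial_{\xi_j}p_2=2\xi_j$ and $D_{x_j}q_{-2}=\sqrt{-1}\,|\xi|^{-4}\partial_{x_j}(|\xi|^2)=\sqrt{-1}\,|\xi|^{-4}\xi_\alpha\xi_\beta\partial_j g^{\alpha\beta}$, the first bracket term gives $-\sqrt{-1}|\xi|^{-4}\xi_k(\Gamma^k-2\delta^k)$ and the second gives $-\sqrt{-1}|\xi|^{-6}2\xi^j\xi_\alpha\xi_\beta\partial_j g^{\alpha\beta}$, matching the stated formula.

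Finally, to justify $\sigma_0(D)(x_0)=-\frac{3}{4}h'(0)c(dx_n)$, I would expand the zeroth-order part $-\frac14\sum_{i,s,t}\omega_{s,t}(e_i)c(e_i)c(e_s)c(e_t)$ of $D$ in the orthonormal frame $\tilde e_i=\sqrt{h(x_n)}\,e_i$ $(i<n)$, $\tilde e_n=dx_n$ adapted to $\frac{1}{h(x_n)}g^{\partial M}+dx_n^2$. The normal-coordinate conditions $\partial_{x_s}g^{\partial M}_{ij}(x_0)=0$ kill all tangential derivatives of the metric, so only the connection coefficients generated by the $x_n$-derivative of the warping factor $h$ survive; propagating these through the Clifford products and summing isolates the single $c(dx_n)$ term with coefficient $-\frac34 h'(0)$. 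This last step is the genuine difficulty: it is where the nontrivial geometric input enters, and the spin-connection one-form for the warped metric must be computed carefully and the Clifford bookkeeping kept exact, whereas the two symbol recursions above are essentially algebraic.
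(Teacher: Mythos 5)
Your derivation is correct: the paper itself offers no proof of this lemma (it is imported from \cite{Wa3,Wa4}), and your argument is exactly the standard one underlying those references, reusing the parametrix recursion $q_{-2}=-p_1^{-1}[p_0p_1^{-1}+\sum_j\partial_{\xi_j}p_1D_{x_j}(p_1^{-1})]$ that the paper already sets up in the proof of its Lemma~3.4, together with the analogous order-$(-1)$ matching for $D^2\circ D^{-2}$. The sign bookkeeping in both recursions checks out ($-p_1^{-1}p_0p_1^{-1}=c(\xi)\sigma_0(D)c(\xi)/|\xi|^4$ since $(\sqrt{-1})^2=-1$, and $D_{x_j}q_{-2}=\sqrt{-1}|\xi|^{-4}\xi_\alpha\xi_\beta\partial_jg^{\alpha\beta}$), and your evaluation of $\sigma_0(D)(x_0)$ is the standard warped-metric spin-connection computation in which only $\omega_{i,n}(\tilde e_i)=\pm\frac{1}{2}h'(0)$ survives at $x_0$, producing the factor $\frac{n-1}{4}h'(0)=\frac{3}{4}h'(0)$ for $n=4$.
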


\begin{lem}\label{le:32}
With the metric $g^{M}$ on $M$ near the boundary
\begin{eqnarray}
\partial_{x_j}(|\xi|_{g^M}^2)(x_0)&=&\left\{
       \begin{array}{c}
        0,  ~~~~~~~~~~ ~~~~~~~~~~ ~~~~~~~~~~~~~{\rm if }~j<n; \\[2pt]
       h'(0)|\xi'|^{2}_{g^{\partial M}},~~~~~~~~~~~~~~~~~~~~~{\rm if }~j=n.
       \end{array}
    \right. \\
\partial_{x_j}[c(\xi)](x_0)&=&\left\{
       \begin{array}{c}
      0,  ~~~~~~~~~~ ~~~~~~~~~~ ~~~~~~~~~~~~~{\rm if }~j<n;\\[2pt]
\frac{h'(0)}{2} c(\xi')(x_{0}), ~~~~~~~~~~~~~~~~~~~{\rm if }~j=n,
       \end{array}
    \right.
\end{eqnarray}
where $\xi=\xi'+\xi_{n}\texttt{d}x_{n}$.
\end{lem}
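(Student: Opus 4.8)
The plan is to deduce both identities purely from the explicit near-boundary form of the dual metric $[g_M^{ij}]$ displayed above, together with the two normalizations attached to our choice of coordinates and frame at $x_0$: that $\partial_{x_s}g^{\partial M}_{ij}(x_0)=0$ for $1\le s,i,j\le n-1$ (geodesic normal coordinates on $\partial M$), and that the conformal factor $h$ depends on $x_n$ alone. I would also record $h(0)=1$, which follows from $g^M_{ij}(x_0)=\delta_{ij}$.

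For the first identity I would start from $[g_M^{ij}]$, which gives
\begin{equation*}
|\xi|^2_{g^M}=h(x_n)\,|\xi'|^2_{g^{\partial M}}+\xi_n^2,\qquad |\xi'|^2_{g^{\partial M}}=\sum_{i,k<n}g^{ik}_{\partial M}\xi_i\xi_k .
\end{equation*}
Differentiating in $x_j$ for $j<n$, the factor $h(x_n)$ is annihilated and $\partial_{x_j}g^{ik}_{\partial M}(x_0)=0$, so the whole expression has vanishing derivative at $x_0$. For $j=n$, since $g^{\partial M}$ is $x_n$-independent only the $h$-factor survives, producing $h'(x_n)|\xi'|^2_{g^{\partial M}}$, which at $x_0$ equals $h'(0)|\xi'|^2_{g^{\partial M}}$.

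For the Clifford identity I would write $c(\xi)=\sum_i\xi_i\,c(dx_i)$ and expand each $dx_i$ in the $g^M$-orthonormal coframe $\{\widetilde{e}^k\}$ dual to $\{\widetilde{e}_k\}$, say $dx_i=\sum_k A_{ik}\widetilde{e}^k$ with $A_{ik}=dx_i(\widetilde{e}_k)$. Because $c(\widetilde{e}^k)$ is the constant matrix $c(E_k)$ in the spin frame $\{[(\sigma,f_l)]\}$, all $x$-derivatives of $c(dx_i)$ are carried by the scalar coefficients $A_{ik}$. Using $\widetilde{e}_k=\sqrt{h}\,e_k$ for $k<n$, $\widetilde{e}_n=\partial_{x_n}$, and that the $e_k$ are tangent to $\partial M$, a direct evaluation gives $dx_n=\widetilde{e}^n$ (so $c(dx_n)$ is constant, with vanishing derivatives) and $A_{ik}=\sqrt{h}\,e_k^i$ for $i,k<n$, where $e_k=\sum_l e_k^l\partial_{x_l}$. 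The two cases then follow by differentiating $A_{ik}$: for $j<n$ one uses $\partial_{x_j}\sqrt{h}=0$ together with $\partial_{x_j}e_k^i(x_0)=0$ (the frame $\{e_k\}$ is parallel along geodesics, so its components have vanishing tangential derivatives at the center), giving $\partial_{x_j}c(\xi)(x_0)=0$; for $j=n$, since $e_k$ is $x_n$-independent, $\partial_{x_n}A_{ik}(x_0)=\tfrac{h'(0)}{2}\,\delta_k^i$ (using $h(0)=1$ and $e_k^i(x_0)=\delta_k^i$), hence $\partial_{x_n}c(dx_i)(x_0)=\tfrac{h'(0)}{2}c(dx_i)(x_0)$, and summing against $\xi_i$ while noting the $c(dx_n)$-term drops out yields $\partial_{x_n}c(\xi)(x_0)=\tfrac{h'(0)}{2}c(\xi')(x_0)$.

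The main obstacle is bookkeeping rather than conceptual: one must justify cleanly that every $x_n$-dependence enters only through $h(x_n)$ — in particular that the boundary frame $\{e_k\}$ and the boundary metric $g^{\partial M}$ may be taken independent of the normal variable — and that all surviving tangential derivatives vanish at $x_0$ by the two normal-coordinate normalizations. Once these vanishing statements are in hand, both identities collapse onto the single derivative $h'(0)$.
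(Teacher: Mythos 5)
Your argument is correct and follows essentially the same route as the paper: write $|\xi|^2_{g^M}=h(x_n)|\xi'|^2_{g^{\partial M}}+\xi_n^2$ and expand $c(\xi')$ in the $g^M$-orthonormal coframe with scalar coefficients of the form $\sqrt{h(x_n)}$ times $x_n$-independent boundary quantities whose tangential derivatives vanish at $x_0$, so that only $\partial_{x_n}\sqrt{h}$ survives and yields $h'(0)/2$. The only difference is cosmetic (you carry the frame components $e_k^i$ where the paper carries the pairings $H^{k,l}$), and your explicit remarks that $h(0)=1$ and that $c(dx_n)$ is constant in the trivialized spin frame are points the paper uses implicitly.
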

\begin{proof}
By the equality
  $\partial_{x_j}(|\xi|_{g^M}^2)(x_0)=\partial_{x_j}(h(x_n)g^{l,m}_{\partial M}(x')\xi_l\xi_m+\xi_n^2)(x_0)$ and
Lemma 2.2 \cite{Wa3}, the first  is correct.

 Write $<dx^k,e_l^{*}>_{g^{\partial M}}=H^{k,l}, 1\leq k,l \leq n$, then
$\partial_{x_j}H^{k,l}(x_0)=0$. Define $dx_j^*\in TM|_{\widetilde
U}$ by $<dx_j^*,v>=(dx_j,v)$ for $v\in TM.$ For $j<n$,
\begin{align}
c(\xi')=\sum_{k,l=1}^{n-1}\xi_{k}g(dx^{k},\widetilde{e_l}^{*})c(\widetilde{e_l})
       =\sum_{k,l=1}^{n-1}\xi_{k} \sqrt{h(x_{n})}H_{k,l}c(\widetilde{e_l}).
\end{align}
Then for $l<n$, $\partial_{x_l}c(\xi')(x_0)=0, $
and
\begin{align}
\partial_{x_n}c(\xi')(x_0)=\frac{h'(x_{n})}{2\sqrt{h(x_{n})}}\sum_{k,l=1}^{n-1}\xi_{k}\delta_{k}^{l}c(\widetilde{e_l})(x_0)
  =\frac{h'(0)}{2} c(\xi')(x_{0}).
\end{align}
\end{proof}

\begin{lem} The following identities hold:
\begin{align}
\int_{|\xi'|=1}{\rm Tr} \Big(\sum_{j=1}^{n-1}\xi_{j}\partial_{x_{n}}\big( v_{j}c(w)c(\xi')\big)   \Big)(x_0)\sigma(\xi')
= -\frac{4\pi}{3}\Big(h'(0)g^{M}(v^{T},w^{T})+\partial_{x_{n}}g^{M}(v^{T},w^{T})\Big){\rm Tr}_{S(TM)}[{\rm id}](x_0)
\end{align}
\end{lem}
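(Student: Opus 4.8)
The plan is to exploit the facts that the covector components $\xi_j$ are constants and that both the trace and the derivative $\partial_{x_n}$ are linear, so that the whole integrand collapses to the $x_n$-derivative of a single scalar function. Concretely, since $\xi_j$ does not depend on $x$ and $\mathrm{Tr}$ commutes with $\partial_{x_n}$, I would first rewrite
\[\sum_{j=1}^{n-1}\xi_j\,{\rm Tr}\big(\partial_{x_n}(v_j c(w)c(\xi'))\big) = \partial_{x_n}\Big(\sum_{j=1}^{n-1}\xi_j v_j\,{\rm Tr}(c(w)c(\xi'))\Big),\]
so that no separate computation of $\partial_{x_n}c(w)$ or $\partial_{x_n}c(\xi')$ is needed; the $x_n$-dependence of the two Clifford factors is absorbed once the trace is evaluated. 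One could instead expand by Leibniz and invoke Lemma \ref{le:32} to write $\partial_{x_n}c(\xi')(x_0)=\tfrac{h'(0)}{2}c(\xi')$, but it is cleaner to keep $c(w)c(\xi')$ together and differentiate \emph{after} taking the trace.

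Next I would evaluate ${\rm Tr}(c(w)c(\xi'))$ using the fundamental trace identity for two Clifford elements. Working in the frame of the excerpt in which the $c(\widetilde e_i)$ are the constant matrices $c(E_i)$ (with $c(\widetilde e_i)c(\widetilde e_j)+c(\widetilde e_j)c(\widetilde e_i)=-2\delta_{ij}$), one gets
\[{\rm Tr}(c(w)c(\xi')) = -\langle\xi',w\rangle\,{\rm Tr}_{S(TM)}[{\rm id}] = -\Big(\sum_{i=1}^{n-1}\xi_i w_i\Big){\rm Tr}_{S(TM)}[{\rm id}],\]
the crucial point being that the dual-metric pairing $g^{*}(\xi',w^{\flat})$ reduces to the metric-free natural pairing $\xi'(w)=\sum_{i<n}\xi_i w_i$, and only the tangential components of $w$ survive because $\xi'$ is tangential. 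Substituting into the previous display and differentiating the resulting scalar (recalling that the $\xi_i$ are constant) turns the integrand into $-{\rm Tr}_{S(TM)}[{\rm id}]\sum_{i,j<n}\xi_i\xi_j\,\partial_{x_n}(v_j w_i)(x_0)$.

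I would then carry out the angular integral. For $n=4$ the surface $|\xi'|=1$ is $S^2$, and by symmetry $\int_{|\xi'|=1}\xi_i\xi_j\,\sigma(\xi')=\tfrac{4\pi}{3}\delta_{ij}$; applying this collapses the double sum to $-\tfrac{4\pi}{3}{\rm Tr}_{S(TM)}[{\rm id}]\sum_{i<n}\partial_{x_n}(v_iw_i)(x_0)$.

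The last and genuinely delicate step is to recognize that $\sum_{i<n}\partial_{x_n}(v_iw_i)$ is \emph{not} $\partial_{x_n}g^M(v^T,w^T)$: the difference is precisely the contribution of differentiating the metric coefficients. Using $g^M_{ij}=\tfrac{1}{h(x_n)}g^{\partial M}_{ij}$ together with the normal-coordinate normalizations $h(0)=1$, $g^{\partial M}_{ij}(x_0)=\delta_{ij}$ and $\partial_{x_s}g^{\partial M}_{ij}(x_0)=0$, one finds $\partial_{x_n}g^M_{ij}(x_0)=-h'(0)\delta_{ij}$, whence $\partial_{x_n}g^M(v^T,w^T)(x_0)=-h'(0)g^M(v^T,w^T)(x_0)+\sum_{i<n}\partial_{x_n}(v_iw_i)(x_0)$. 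Solving for $\sum_i\partial_{x_n}(v_iw_i)$ and substituting produces exactly $-\tfrac{4\pi}{3}\big(h'(0)g^M(v^T,w^T)+\partial_{x_n}g^M(v^T,w^T)\big){\rm Tr}_{S(TM)}[{\rm id}](x_0)$, the claimed identity. I expect this bookkeeping of the normal derivative of the metric — and the extra $h'(0)g^M$ term it generates — to be the main obstacle, since overlooking it would leave only the $\partial_{x_n}g^M$ part and miss the first summand on the right-hand side.
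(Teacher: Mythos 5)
Your proof is correct and reaches the stated identity, but it organizes the computation differently from the paper. The paper expands $c(\xi')=\sum_{k,l}\xi_k\sqrt{h(x_n)}H^{kl}c(\tilde e_l)$ in the orthonormal frame and applies Leibniz, so the coefficient $h'(0)$ is assembled from \emph{two} separate contributions of $\partial_{x_n}\sqrt{h(x_n)}$: one from the explicit $\sqrt{h}$ in the frame expansion of $c(\xi')$ (giving $-\tfrac{2\pi}{3}h'(0)g^M(v^T,w^T)$) and one hidden in the identity $\sum_j\tilde v_jw_j=\sqrt{h(x_n)}\,g^M(v^T,w^T)$ relating frame components to the metric (giving the other $-\tfrac{2\pi}{3}h'(0)g^M(v^T,w^T)$ plus the $\partial_{x_n}g^M$ term). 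You instead keep $c(w)c(\xi')$ intact, use the exact pointwise identity ${\rm Tr}(c(w)c(\xi'))=-\xi'(w)\,{\rm Tr}_{S(TM)}[{\rm id}]$ with the metric-free coordinate pairing, and recover the full $h'(0)$ term in a single step from $\partial_{x_n}g^M_{ij}(x_0)=-h'(0)\delta_{ij}$ when converting $\sum_{i<n}\partial_{x_n}(v_iw_i)$ into $\partial_{x_n}g^M(v^T,w^T)+h'(0)g^M(v^T,w^T)$. Your route is cleaner in that it avoids the frame coefficients $H^{kl}$ and the auxiliary components $\tilde v_\alpha$ altogether, and it makes transparent that the entire $h'(0)$ contribution is nothing but the normal derivative of the metric coefficients; the paper's route has the advantage of staying in the same frame formalism used throughout the rest of Section 3, where traces of longer Clifford products must be computed anyway. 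The one point to state explicitly in your write-up is that the identity ${\rm Tr}(c(w)c(\xi'))=-\sum_{i<n}\xi_iw_i\,{\rm Tr}_{S(TM)}[{\rm id}]$ holds as a function of $x_n$ (not merely at $x_0$) precisely because the natural pairing $\xi'(w)$ of a one-form with constant coordinate components against the coordinate components of $w$ involves no metric coefficients; this is what licenses differentiating after taking the trace, and it is the step where a reader might otherwise suspect a lost $x_n$-dependence.
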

\begin{proof}
Let $c(\xi')=\sum_{k,l=1}^{n-1}\xi_{k}\sqrt{h(x_{n})}H^{kl}c(\tilde{e}_{l})$ and $H^{kl}=g^{\partial_{M}}(dx^{k},e^{l,*})$,
\begin{align}
&\int_{|\xi'|=1}{\rm Tr} \Big(\sum_{j=1}^{n-1}\xi_{j}\partial_{x_{n}}\big( v_{j}c(w)c(\xi')\big)   \Big)(x_{0})\sigma(\xi')\nonumber\\
=& \int_{|\xi'|=1} \sum_{j=1}^{n-1}\xi_{j}\partial_{x_{n}}{\rm Tr} \big( v_{j}c(w)c(\xi')\big) (x_{0})\sigma(\xi')\nonumber\\
=& \int_{|\xi'|=1}\sum_{j=1}^{n-1}\xi_{j}\partial_{x_{n}}{\rm Tr} \Big(  v_{j}c(w)\sum_{k,l=1}^{n-1}\xi_{k}\sqrt{h(x_{n})}H^{kl} c(\tilde{e}_{l})
\Big)(x_{0})\sigma(\xi')\nonumber\\
=& \int_{|\xi'|=1}\sum_{j,k,l=1}^{n-1}\xi_{j}\xi_{k} \Big(\partial_{x_{n}}[\sqrt{h(x_{n})}]x_{0}H^{kl}{\rm Tr}\big( v_{j}c(w)c(\tilde{e}_{l})  \big)
\Big)(x_{0})\sigma(\xi')\nonumber\\
&+\int_{|\xi'|=1}\sum_{j,k,l=1}^{n-1}\xi_{j}\xi_{k} \Big(\sqrt{h(x_{0})}H^{kl}
\partial_{x_{n}}{\rm Tr}\big( v_{j}c(w)c(\tilde{e}_{l})\big)\Big)(x_{0})\sigma(\xi')\nonumber\\
=& \int_{|\xi'|=1}\sum_{j,k,l=1}^{n-1}\xi_{j}\xi_{k} \Big( \frac{h'(0)}{2}\delta^{kl} {\rm Tr}\big( v_{j}c(w)c(\tilde{e}_{l})  \big)\Big)
(x_{0})\sigma(\xi')\nonumber\\
&+\int_{|\xi'|=1}\sum_{j,k,l=1}^{n-1}\xi_{j}\xi_{k} \Big(\delta^{kl}\partial_{x_{n}}{\rm Tr}\big( v_{j}c(w)c(\tilde{e}_{l})\big)\Big)
(x_{0})\sigma(\xi'),\nonumber\\
=& -\frac{2\pi}{3} h'(0)g^{M}(v^{T},w^{T}){\rm Tr}_{S(TM)}[{\rm id}](x_0)\nonumber\\
&+\int_{|\xi'|=1}\sum_{j,k,l=1}^{n-1}\xi_{j}\xi_{k} \Big(\delta^{kl}\partial_{x_{n}}{\rm Tr}\big( v_{j}c(w)c(\tilde{e}_{l})\big)\Big)
(x_{0})\sigma(\xi'),
     \end{align}
where $w =\sum_{k=1}^{n}w_{k}\tilde{e}_{k}$,
\begin{align}
 \sum_{j,k,l=1}^{n-1}\xi_{j}\xi_{k} \Big( \frac{h'(0)}{2}\delta^{kl} {\rm Tr}\big( v_{j}c(w)c(\tilde{e}_{l})  \big)(x_{0})\Big)
 = \sum_{j,k =1}^{n-1}\xi_{j}\xi_{k}\frac{h'(0)}{2}(-v_{j}w_{k}){\rm Tr}_{S(TM)}[{\rm id}].
     \end{align}

Let $v=\sum_{j=1}^{n}v_{j}\partial_{x_{j}}$, then
\begin{align}
v^{T}=:\sum_{j=1}^{n-1}v_{j}\partial_{x_{j}} =\sum_{j,\alpha=1}^{n-1}v_{j} \langle\partial_{x_{j}},e_\alpha\rangle _{g^{\partial M}} e_\alpha
=\sum_{j,\alpha=1}^{n-1}v_{j} H_{j\alpha}^{\partial M}e_\alpha=\sum_{j,\alpha=1}^{n}\Big(v_{j} H_{j\alpha}^{\partial M}\Big)e_\alpha
=:\sum_{\alpha=1}^{n-1}\tilde{v}_{\alpha}e_\alpha,
     \end{align}
 and  we note that
\begin{align}
\sum_{\alpha=1}^{n}\partial_{x_{n}}( \tilde{v}_{\alpha}w_{\alpha})
=\sum_{\alpha=1}^{n}\partial_{x_{n}}(\sum_{j=1}^{n}\tilde{v}_{j} H_{j\alpha}^{\partial_{M}}w_{\alpha})
=\sum_{\alpha=1}^{n}\partial_{x_{n}}( v_{\alpha}w_{\alpha}).
\end{align}
Let $w^{T}=\sum_{k=1}^{n-1}w_{k}\tilde{e}_{k}=\sum_{k=1}^{n-1}w_{k}\sqrt{h(x_{n})}e_{k}$, then
\begin{align}
 g^{M}(v^{T},w^{T})= g^{M}(\sum_{\alpha=1}^{n-1}\tilde{v}_{\alpha}e_{\alpha},\sum_{k=1}^{n-1}w_{k}\tilde{e}_{k})
 =g^{M}\Big(\sum_{k=1}^{n-1}\tilde{v}_{\alpha}\frac{\tilde{e}_{\alpha}}{\sqrt{h(x_{n})}},\sum_{k=1}^{n-1}w_{k}\tilde{e}_{k}\Big)
 =\frac{1}{\sqrt{h(x_{n})}}\sum_{j=1}^{n-1}\tilde{v}_{j}w_{j}.
 \end{align}
Therefore
\begin{align}
\int_{|\xi'|=1}\sum_{j,k,l=1}^{n-1}\xi_{j}\xi_{k}  \delta^{kl}\partial_{x_{n}}{\rm Tr}\big( v_{j}c(w)c(\tilde{e}_{l})\big)(x_0)\sigma(\xi')
=&\int_{|\xi'|=1}\sum_{j,k =1}^{n-1}\xi_{j}\xi_{k} \partial_{x_{n}} \big(- v_{j}w_{k}\big)  {\rm Tr}_{S(TM)}[{\rm id}](x_0)\sigma(\xi')\nonumber\\
=&\int_{|\xi'|=1}\sum_{j,k =1}^{n-1}\xi_{j}\xi_{k} \partial_{x_{n}} \big(- \tilde{v}_{j}w_{k}\big)  {\rm Tr}_{S(TM)}[{\rm id}](x_0)\sigma(\xi')\nonumber\\
=& -\frac{4}{3}\pi\partial_{x_{n}} \big(\sqrt{h(x_{n})} g^{M}(v^{T},w^{T})\big)  {\rm Tr}_{S(TM)}[{\rm id}](x_0)\nonumber\\
=&-\frac{2}{3}\pi h'(0)g^{M}(v^{T},w^{T}){\rm Tr}_{S(TM)}[{\rm id}](x_0)\nonumber\\
&-\frac{4}{3}\pi\partial_{x_{n}}\big(g^{M}(v^{T},w^{T}) \big){\rm Tr}_{S(TM)}[{\rm id}](x_0),
     \end{align}
      and the proof of the Lemma is complete.
\end{proof}
Now we  need to compute $  \widetilde{\Phi^{*}}$. When $n=4$, then ${\rm Tr}_{S(TM)}[{\rm \texttt{id}}]
={\rm dim}(\wedge^*(\mathbb{R}^2))=4$,
 the sum is taken over $r+l-k-j-|\alpha|=-3,~~r\leq 0,~~l\leq-2,$ then we have the following five cases:

 {\bf case a)~I)}~$r=0,~l=-2,~k=j=0,~|\alpha|=1$.

 By (2.13), we get
\begin{equation}
\label{b24}
\widetilde{\Phi^{*}}_1=-\int_{|\xi'|=1}\int^{+\infty}_{-\infty}\sum_{|\alpha|=1}
 {\rm Tr}[\partial^\alpha_{\xi'}\pi^+_{\xi_n}\sigma_{0}(-2c(w)\nabla_v^{S(TM)} D^{-1})\times
 \partial^\alpha_{x'}\partial_{\xi_n}\sigma_{-2}(D^{-2})](x_0){\rm d}\xi_n\sigma(\xi'){\rm d}x'.
\end{equation}

 By Lemma 2.2 in \cite{Wa3}, for $i<n$, then
\begin{equation}
\partial_{x_i}\sigma_{-2}({D}^{-2})(x_0)=\partial_{x_i}(|\xi|^{-2})(x_0)=-\frac{\partial_{x_i}(|\xi|^{2})(x_0)}{|\xi|^4}=0,
\end{equation}
so $\widetilde{\Phi^{*}}_1=0$.

 {\bf case a)~II)}~$r=0,~l=-2,~k=|\alpha|=0,~j=1$.

By (2.13), we get
\begin{equation}
\label{b26}
\widetilde{\Phi^{*}}_2=-\frac{1}{2}\int_{|\xi'|=1}\int^{+\infty}_{-\infty} {\rm
Tr} [\partial_{x_n}\pi^+_{\xi_n}\sigma_{0}(-2c(w)\nabla_v^{S(TM)} D^{-1})\times
\partial_{\xi_n}^2\sigma_{-2}(D^{-2})](x_0){\rm d}\xi_n\sigma(\xi'){\rm d}x'.
\end{equation}

  By Lemma 3.2, we have
\begin{eqnarray}
\partial_{\xi_n}^2\sigma_{-2}((D^{-2}))(x_0)=\partial_{\xi_n}^2(|\xi|^{-2})(x_0)=\frac{6\xi_n^2-2}{(1+\xi_n^2)^3}.
\end{eqnarray}
   By Lemma 3.4, we have
 \begin{align}
\label{b22}
\sigma_{0}(-2c(w)\nabla_v^{S(TM)}D^{-1})=&\sum_{j=1}^n(-2c(w))v_j\sqrt{-1}\xi_{j}\frac{\sqrt{-1}c(\xi)}{|\xi|^2}.
\end{align}
Also, straightforward computations yield
 \begin{align}
 \partial_{x_n}\sigma_{0}(-2c(w)\nabla_v^{S(TM)}D^{-1})
=&\partial_{x_n}\Big(\sum_{j=1}^n(-2c(w))v_j\sqrt{-1}\xi_{j}\frac{\sqrt{-1}c(\xi)}{|\xi|^2}\Big)\nonumber\\
=&2\sum_{j=1}^n\xi_{j}\partial_{x_n}\Big(c(w)v_j\xi_{j}\frac{c(\xi)}{|\xi|^2}\Big)\nonumber\\
=&2\sum_{j=1}^n\xi_{j}\Big[\partial_{x_n}\Big(c(w)v_j\frac{c(\xi')}{|\xi|^2}\Big)
  +\xi_{n}\partial_{x_n}\Big(c(w)v_j c(dx_{n})\frac{1}{|\xi|^2} \Big)\Big].
  \end{align}
Denote by ${\rm Tr} $ the trace, by the relation of the Clifford action and ${\rm Tr} (AB)={\rm Tr} (BA)$, then
we have the equalities
  \begin{align}
  {\rm Tr}\big(c(w)\partial_{x_n}(c(\xi'))\big)(x_{0})
={\rm Tr}\big(c(w)\frac{h'(0)}{2}(c(\xi'))(x_{0})
    =-\frac{h'(0)}{2}g(w,\xi')(x_{0}){\rm Tr}_{S(TM)}[{\rm  id }].
  \end{align}
In the same way we get
\begin{align}
  {\rm Tr}\Big(\partial_{x_n}\big(v_{n}c(w)c(dx_{n})\big)\Big)(x_{0})
  =&\partial_{x_n}\Big({\rm Tr}\big(v_{n}c(w)c(dx_{n})\big)\Big)(x_{0})\nonumber\\
  =&\partial_{x_n}\Big(v_{n}{\rm Tr}\big(c(w)c(dx_{n})\big)\Big)(x_{0})=\partial_{x_n}\big(-v_{n} w_{n} \big) (x_{0})
  {\rm Tr}_{S(TM)}[{\rm  id }].
  \end{align}
 We note that $i<n,~\int_{|\xi'|=1}\xi_{i_{1}}\xi_{i_{2}}\cdots\xi_{i_{2d+1}}\sigma(\xi')=0$,
so we omit some items that have no contribution for computing {\bf case a)~II)}. By Lemma 3.7 and (3.36)-(3.40), we obtain
  \begin{align}
\label{b26}
\widetilde{\Phi^{*}}_2=&-\frac{1}{2}\int_{|\xi'|=1}\int^{+\infty}_{-\infty} {\rm
Tr} [\partial_{x_n}\pi^+_{\xi_n}\sigma_{0}(-2c(w)\nabla_v^{S(TM)} D^{-1})\times
\partial_{\xi_n}^2\sigma_{-2}(D^{-2})](x_0){\rm d}\xi_n\sigma(\xi'){\rm d}x' \nonumber\\
=&-\frac{1}{2}\int_{|\xi'|=1}\int^{+\infty}_{-\infty} \frac{i(2-6\xi_{n}^{2})}{(\xi_{n}-i)(1+\xi_{n}^{2})^{3}}
\sum_{j=1}^{n-1}\xi_{j}{\rm Tr}\Big( \partial_{x_n}\big(v_{j}c(w)c(\xi')\big) \Big)(x_0){\rm d}\xi_n\sigma(\xi'){\rm d}x' \nonumber\\
&-\frac{1}{2}\int_{|\xi'|=1}\int^{+\infty}_{-\infty} \frac{(2+i\xi_{n})(3\xi_{n}^{2}-1)}{(\xi_{n}-i)^{2}(1+\xi_{n}^{2})^{3}}
\sum_{j=1}^{n-1}\xi_{j}h'(0){\rm Tr}\big( v_{j}c(w)c(\xi')\big) (x_0){\rm d}\xi_n\sigma(\xi'){\rm d}x' \nonumber\\
&-\frac{1}{2}\int_{|\xi'|=1}\int^{+\infty}_{-\infty} \frac{i(6\xi_{n}^{2}-2)}{(\xi_{n}-i)(1+\xi_{n}^{2})^{3}}
{\rm Tr}\Big( \partial_{x_n}\big(v_{n}c(w)c(dx_{n})\big) \Big)(x_0){\rm d}\xi_n\sigma(\xi'){\rm d}x' \nonumber\\
&-\frac{1}{2}\int_{|\xi'|=1}\int^{+\infty}_{-\infty} \frac{i\xi_{n}(3\xi_{n}^{2}-1)}{(\xi_{n}-i)^{2}(1+\xi_{n}^{2})^{3}}
h'(0){\rm Tr}\Big(  v_{n}c(w)c(dx_{n} \big) \Big)(x_0){\rm d}\xi_n\sigma(\xi'){\rm d}x' \nonumber\\
=&\frac{2}{3}\pi^{2} \partial_{x_n}\big(g^{M}(v^{T},w^{T})\big){\rm d}x'
  +\frac{1}{6}\pi^{2}g^{M}(v^{T},w^{T})h'(0){\rm d}x'\nonumber\\
  &+ 2\pi^{2}\partial_{x_n}\big(v_{n}w_{n}\big)dx'-\frac{1}{2}\pi^{2}v_{n}w_{n}h'(0){\rm d}x'.
 \end{align}

 {\bf case a)~III)}~$r=0,~l=-2,~j=|\alpha|=0,~k=1$.

 By (2.13), we get
\begin{align}\label{36}
\widetilde{\Phi^{*}}_3&=-\frac{1}{2}\int_{|\xi'|=1}\int^{+\infty}_{-\infty}
{\rm Tr} [\partial_{\xi_n}\pi^+_{\xi_n}
\sigma_{0}(-2c(w)\nabla_v^{S(TM)}D^{-1})
\times
\partial_{\xi_n}\partial_{x_n}\sigma_{-2}(D^{-2})](x_0){\rm d}\xi_n\sigma(\xi'){\rm d}x'.
\end{align}
From Lemma 3.2, and direct computations, we obtain
\begin{eqnarray}\label{37}
\partial_{x_n}\sigma_{-2}(D^{-2})(x_0)|_{|\xi'|=1}
=-\frac{h'(0)}{(1+\xi_n^2)^2}.
\end{eqnarray}
 Then
 \begin{eqnarray}\label{37}
\partial_{\xi_n}\partial_{x_n}\sigma_{-2}(D^{-2})(x_0)|_{|\xi'|=1}
=\partial_{\xi_n}\big(-\frac{h'(0)}{(1+\xi_n^2)^2}\big)=\frac{4\xi_n h'(0)}{(1+\xi_n^2)^3}.
\end{eqnarray}
   By Lemma 3.4, we have
 \begin{align}
\label{b22}
\pi^+_{\xi_n}\Big(\sigma_{0}(-2c(w)\nabla_v^{S(TM)}D^{-1})\Big)
=&\sum_{j=1}^n\pi^+_{\xi_n}\Big((-2c(w))v_j\sqrt{-1}\xi_{j}\frac{\sqrt{-1}c(\xi)}{|\xi|^2}\Big)\nonumber\\
=&2\sum_{j=1}^{n-1}v_j\xi_{j}\Big(\pi^+_{\xi_n}\big(  \frac{c(w)c(\xi')}{1+\xi_n^2 } \big)
+\pi^+_{\xi_n}\big( \xi_n \frac{c(w)c(dx_{n})}{1+\xi_n^2 }  \big)
\Big)\nonumber\\
&+2 v_n\Big(\pi^+_{\xi_n}\big( \xi_n \frac{c(w)c(\xi')}{1+\xi_n^2 } \big)
+\pi^+_{\xi_n}\big( \xi_n^{2 }\frac{c(w)c(dx_{n})}{1+\xi_n^2 }  \big)
\Big).
\end{align}
Using the Cauchy integral formula, we obtain
  \begin{align}
\label{b22}
 \pi^+_{\xi_n}\big(  \frac{c(w)c(\xi')}{1+\xi_n^2 } \big)= \frac{-i}{2(\xi_n-i)}c(w)c(\xi'),
 \end{align}
  and
  \begin{align}
\label{b22}
 \pi^+_{\xi_n}\big( \xi_n^{2 }\frac{c(w)c(dx_{n})}{1+\xi_n^2 }  \big)= \frac{i}{2(\xi_n-i)}c(w)c(dx_{n}).
 \end{align}
It follows that
  \begin{align}
\label{b22}
\partial_{\xi_n} \pi^+_{\xi_n}\big(  \frac{c(w)c(\xi')}{1+\xi_n^2 } \big)= \partial_{\xi_n}\big(\frac{-i}{2(\xi_n-i)}\big)c(w)c(\xi')
=  \frac{i}{2(\xi_n-i)^{2}} c(w)c(\xi'),
 \end{align}
  and
  \begin{align}
\label{b22}
\partial_{\xi_n} \pi^+_{\xi_n}\big( \xi_n^{2 }\frac{c(w)c(dx_{n})}{1+\xi_n^2 }  \big)
= \partial_{\xi_n}\big(\frac{i}{2(\xi_n-i)}\big)c(w)c(dx_{n})=\frac{-i}{2(\xi_n-i)^{2}} c(w)c(dx_{n}).
 \end{align}
Combining (3.44)-(3.49), we obtain
\begin{align}\label{36}
\widetilde{\Phi^{*}}_3=&-\frac{1}{2}\int_{|\xi'|=1}\int^{+\infty}_{-\infty} \frac{4i \xi_{n} }{(\xi_{n}-i)^{5}(\xi_{n}+i)^{3}}
\sum_{j,k=1}^{n-1}(-v_{j}w_{k})\xi_{j}\xi_{k}{\rm Tr}_{S(TM)}[{\rm  id }](x_0) {\rm d}\xi_n\sigma(\xi'){\rm d}x'\nonumber\\
&-\frac{1}{2}\int_{|\xi'|=1}\int^{+\infty}_{-\infty} \frac{4i \xi_{n} }{(\xi_{n}-i)^{5}(\xi_{n}+i)^{3}}
v_{n}w_{n}  {\rm Tr}_{S(TM)}[{\rm  id }](x_0) {\rm d}\xi_n\sigma(\xi'){\rm d}x'\nonumber\\
=& -\frac{5}{6}\pi^{2}g^{M}(v^{T},w^{T})h'(0){\rm d}x' +\frac{5}{2}\pi^{2}v_{n}w_{n}h'(0){\rm d}x'.
\end{align}

 {\bf case a)~IV)}~$r=0,~l=-3,~k=j=|\alpha|=0$.

By (2.13), we get
\begin{align}
\widetilde{\Phi^{*}}_4&=-i\int_{|\xi'|=1}\int^{+\infty}_{-\infty}{\rm Tr} [\pi^+_{\xi_n}\sigma_{0}(-2c(w)\nabla_v^{S(TM)} D^{-1})\times
\partial_{\xi_n}\sigma_{-3}(D^{-2})](x_0){\rm d}\xi_n\sigma(\xi'){\rm d}x'\nonumber\\
&=i\int_{|\xi'|=1}\int^{+\infty}_{-\infty}{\rm Tr} [\partial_{\xi_n}\pi^+_{\xi_n}\sigma_{0}(-2c(w)\nabla_v^{S(TM)} D^{-1})\times
\sigma_{-3}(D^{-2})](x_0){\rm d}\xi_n\sigma(\xi'){\rm d}x'.
\end{align}
 By Lemma 3.5, we have
\begin{align}
\sigma_{-3}(D^{-2})(x_0)|_{|\xi'|=1}=-\frac{i}{(1+\xi_n^2)^2}\left(-\frac{1}{2}h'(0)\sum_{k<n}\xi_nc(e_k)c(e_n)
+\frac{5}{2}h'(0)\xi_n\right)-\frac{2ih'(0)\xi_n}{(1+\xi_n^2)^3}.
\end{align}
From Lemma 3.4 and direct computations, we obtain
 \begin{align}
\label{b22}
\partial_{\xi_n}\pi^+_{\xi_n}\Big(\sigma_{0}(-2c(w)\nabla_v^{S(TM)}D^{-1})\Big)
=&\sum_{j=1}^n\partial_{\xi_n}\pi^+_{\xi_n}\Big((-2c(w))v_j\sqrt{-1}\xi_{j}\frac{\sqrt{-1}c(\xi)}{|\xi|^2}\Big)\nonumber\\
=&2\sum_{j=1}^{n-1}v_j\xi_{j}\Big( \frac{i}{(\xi_n-i)^{2}} c(w)c(\xi')+ \frac{-1}{(\xi_n-i)^{2}} c(w)c(dx_{n})\Big)\nonumber\\
 &+v_{n}\Big(\frac{-1}{(\xi_n-i)^{2}} c(w)c(\xi')+ \frac{-i}{(\xi_n-i)^{2}} c(w)c(dx_{n})\Big).
\end{align}
By the relation of the Clifford action, we have the equalities:
\begin{align}
&\sum_{k<n}{\rm Tr} \Big(c(w)c(\xi')c(e_k)c(e_n)\Big)(x_0)\nonumber\\
=&\sum_{j,k=1}^{n-1}\sum_{l=1}^{n}w_{l}\xi_{j}\Big[{\rm Tr} \Big(c(e_l)c(e_j)c(e_k)c(e_n)\Big)\Big](x_0)\nonumber\\
=&\sum_{j,k=1}^{n-1}\sum_{l=1}^{n}w_{l}\xi_{j}\Big[{\rm Tr} \Big(-c(e_j)c(e_l)c(e_k)c(e_n)\Big)
-2\delta_{jl}{\rm Tr} \Big( c(e_k)c(e_n)\Big)\Big]\nonumber\\
=&\sum_{j,k=1}^{n-1}\sum_{l=1}^{n}w_{l}\xi_{j}\Big[{\rm Tr} \Big(c(e_j)c(e_k)c(e_l)c(e_n)\Big)
-2\delta_{kl}{\rm Tr} \Big( c(e_j)c(e_n)\Big)\Big]\nonumber\\
=&\sum_{j,k=1}^{n-1}\sum_{l=1}^{n}w_{l}\xi_{j}\Big[{\rm Tr} \Big(-c(e_j)c(e_k)c(e_n)c(e_l)\Big)
-2\delta_{ln}{\rm Tr} \Big( c(e_j)c(e_k)\Big)\Big],
\end{align}
then
\begin{align}
\sum_{k<n}{\rm Tr} \Big(c(w)c(\xi')c(e_k)c(e_n)\Big)
=\sum_{j=1}^{n-1}w_{n}\xi_{j}{\rm Tr}_{S(TM)}[{\rm  id }].
\end{align}
Combining (3.52)-(3.55), we get
\begin{align}
\label{b26}
\widetilde{\Phi^{*}}_4=& \int_{|\xi'|=1}\int^{+\infty}_{-\infty} \frac{1}{2(\xi_{n}-i)^{2}(1+\xi_{n}^{2})^{2}}
\sum_{j,k=1}^{n-1}\xi_{j}\xi_{k}(-v_{j}w_{j})h'(0){\rm Tr}_{S(TM)}[{\rm  id }](x_0){\rm d}\xi_n\sigma(\xi'){\rm d}x' \nonumber\\
&+ \int_{|\xi'|=1}\int^{+\infty}_{-\infty}   \frac{1}{2(\xi_{n}-i)^{2}(1+\xi_{n}^{2})^{2}}
\sum_{j,k=1}^{n-1}\xi_{j}\xi_{k}v_{n}w_{n}h'(0){\rm Tr}_{S(TM)}[{\rm  id }] (x_0){\rm d}\xi_n\sigma(\xi'){\rm d}x' \nonumber\\
&- \int_{|\xi'|=1}\int^{+\infty}_{-\infty} \frac{9i\xi_{n}+5i\xi_{n}^{2}}{2(\xi_{n}-i)^{2}(1+\xi_{n}^{2})^{2}}
\sum_{j,k=1}^{n-1}\xi_{j}\xi_{k}v_{j}w_{k}h'(0)
{\rm Tr}_{S(TM)}[{\rm  id }](x_0){\rm d}\xi_n\sigma(\xi'){\rm d}x' \nonumber\\
&+\int_{|\xi'|=1}\int^{+\infty}_{-\infty}   \frac{9i\xi_{n}+5i\xi_{n}^{2}}{2(\xi_{n}-i)^{2}(1+\xi_{n}^{2})^{2}}
\sum_{j,k=1}^{n-1}\xi_{j}\xi_{k}v_{n}w_{n}h'(0){\rm Tr}_{S(TM)}[{\rm  id }] (x_0){\rm d}\xi_n\sigma(\xi'){\rm d}x' \nonumber\\
=& \frac{19}{6}\pi^{2}g^{M}(v^{T},w^{T})h'(0){\rm d}x' -\frac{49}{6}\pi^{2}v_{n}w_{n}h'(0){\rm d}x'.
 \end{align}

 {\bf case a)~V)}~$r=-1,~\ell=-2,~k=j=|\alpha|=0$.

By  (2.13), we get
\begin{align}\label{61}
\widetilde{\Phi^{*}}_5=-i\int_{|\xi'|=1}\int^{+\infty}_{-\infty}{\rm Tr} [\pi^+_{\xi_n}\sigma_{-1}(-2c(w)\nabla_v^{S(TM)} D^{-1})\times
\partial_{\xi_n}\sigma_{-2}(D^{-2})](x_0){\rm d}\xi_n\sigma(\xi'){\rm d}x'.
\end{align}
By Lemma 3.2, then
\begin{align}\label{62}
\partial_{\xi_n}\sigma_{-2}(D^{-2})|_{|\xi'|=1}(x_0)=-\frac{2\xi_n}{(\xi_n^2+1)^2}.
\end{align}
From Lemma 3.4, we obtain
\begin{align}
\label{b22}
\sigma_{-1}(-2c(w)\nabla_v^{S(TM)}D^{-1})=&\sigma_{1}(-2c(w)\nabla_v^{S(TM)} )\sigma_{-2}(D^{-1})
+\sigma_{0}(-2c(w)\nabla_v^{S(TM)} )\sigma_{-1}(D^{-1})\nonumber\\
&+\sum_{j=1}^{n}\partial_{\xi_{j}}\sigma_{1}\big(-2c(w)\nabla_v^{S(TM)} \big)
D_{x_{j}}\big(\sigma_{-1}(D^{-1})\big)\nonumber\\
=:&H_{1}+H_{2}+H_{3}.
\end{align}

(1) Explicit representation of $H_{1}$:

By Lemma 3.5, we have
\begin{align}
&\sigma_{-2}(D^{-1})(x_{0})=\frac{c(\xi)(-\frac{3}{4}h'(0)c(dx_n))c(\xi)}{|\xi|^4}+\frac{c(\xi)}{|\xi|^6}c(dx_n)
[\partial_{x_n}[c(\xi)(x_{0})]|\xi|^2-c(\xi)h'(0)|\xi'|^2)].
\end{align}
A simple computation shows
\begin{align}
\label{b22}
H_{1}(x_0)=&\sigma_{-1}(-2c(w)\nabla_v^{S(TM)} )\sigma_{-2}(D^{-1})\nonumber\\
=&\sum_{j=1}^nv_j(-2c(w))\sqrt{-1}\xi_{j}\times
\Big(\frac{c(\xi)(-\frac{3}{4}h'(0)c(dx_n))c(\xi)}{|\xi|^4}\nonumber\\
&+\frac{c(\xi)}{|\xi|^6}c(dx_n)
[\partial_{x_n}[c(\xi)(x_{0})]|\xi|^2-c(\xi)h'(0)|\xi'|^2)]\Big)\nonumber\\
=&\sum_{j=1}^{n-1}v_j(-2c(w))\sqrt{-1}\xi_{j}
\Big(\frac{3\xi_{n}^{4}+4\xi_{n}^{2}-7}{4(1+\xi_{n}^{2})^{3}}h'(0)c(dx_n)
+ \frac{3\xi_{n}^{3}+7\xi_{n} }{ (1+\xi_{n}^{2})^{3}}h'(0)c(\xi')\nonumber\\
&+\frac{1}{(1+\xi_{n}^{2})^{2}} c(\xi')c(dx_n)\partial_{x_n}[c(\xi)]
+\frac{-\xi_{n}}{(1+\xi_{n}^{2})^{2}} \partial_{x_n}[c(\xi)]
\Big)\nonumber\\
+&v_n(-2c(w))\sqrt{-1}\xi_{n}
\Big(\frac{3\xi_{n}^{4}+4\xi_{n}^{2}-7}{4(1+\xi_{n}^{2})^{3}}h'(0)c(dx_n)
+ \frac{3\xi_{n}^{3}+7\xi_{n} }{ (1+\xi_{n}^{2})^{3}}h'(0)c(\xi')\nonumber\\
&+\frac{1}{(1+\xi_{n}^{2})^{2}} c(\xi')c(dx_n)\partial_{x_n}[c(\xi)]
+\frac{-\xi_{n}}{(1+\xi_{n}^{2})^{2}} \partial_{x_n}[c(\xi)]\Big).
\end{align}
By  the Cauchy integral formula, then
\begin{align}
 \pi^+_{\xi_n}\left[\frac{-i\xi_{n}(3\xi_{n}^{4}+4\xi_{n}^{2}-7)}{2(1+\xi_{n}^{2})^{3}}\right]
 =&\frac{1}{2\pi i}\lim_{u\rightarrow 0^{-}}\int_{\Gamma^+}\frac{\frac{-i\eta_{n}(3\eta_{n}^{4}+4\eta_{n}^{2}-7)}
 {2(\eta_n+i)^3(\xi_n+iu-\eta_n)}}{(\eta_n-i)^3}
 d\eta_n\nonumber\\
 =&\frac{1}{2\pi i} \frac{2\pi i}{2£¡}\left[\frac{-i\eta_{n}(3\eta_{n}^{4}+4\eta_{n}^{2}-7}
 {2(\eta_n+i)^3(\xi_n-\eta_n)}\right]^{(2)}\Big|_{\eta_n=i}\nonumber\\
 =&\frac{-i}{2(\xi_n-i)^3}.
\end{align}
Similarly,
\begin{align}
\pi^+_{\xi_n}\left[\frac{-2i\xi_{n}}{(1+\xi_{n}^{2})^{2}}\right](x_0)|_{|\xi'|=1}
 =\frac{1}{2(\xi_n-i)^2}.
\end{align}
Substituting (3.60) and (3.61) into (3.57), we obtain
\begin{align}\label{61}
&-i\int_{|\xi'|=1}\int^{+\infty}_{-\infty}{\rm Tr} [\pi^+_{\xi_n}(H_{1})\times
\partial_{\xi_n}\sigma_{-2}(D^{-2})](x_0){\rm d}\xi_n\sigma(\xi'){\rm d}x'\nonumber\\
=& \int_{|\xi'|=1}\int^{+\infty}_{-\infty} \frac{2\xi_{n}}{(\xi_{n}-i)^{3}(1+\xi_{n}^{2})^{2}}
\sum_{j,k=1}^{n-1}\xi_{j}\xi_{k}v_{j}w_{j}h'(0){\rm Tr}_{S(TM)}[{\rm  id }](x_0){\rm d}\xi_n\sigma(\xi'){\rm d}x' \nonumber\\
&+ \int_{|\xi'|=1}\int^{+\infty}_{-\infty}   \frac{i\xi_{n}}{2(\xi_{n}-i)^{2}(1+\xi_{n}^{2})^{2}}
\sum_{j,k=1}^{n-1}\xi_{j}\xi_{k}v_{j}w_{k}h'(0){\rm Tr}_{S(TM)}[{\rm  id }] (x_0){\rm d}\xi_n\sigma(\xi'){\rm d}x' \nonumber\\
&+\int_{|\xi'|=1}\int^{+\infty}_{-\infty} \frac{-\xi_{n}}{ (\xi_{n}-i)^{3}(1+\xi_{n}^{2})^{2}}
 v_{n}w_{n}h'(0)
{\rm Tr}_{S(TM)}[{\rm  id }](x_0){\rm d}\xi_n\sigma(\xi'){\rm d}x' \nonumber\\
&+\int_{|\xi'|=1}\int^{+\infty}_{-\infty}   \frac{i\xi_{n}}{ 2(\xi_{n}-i)^{2}(1+\xi_{n}^{2})^{2}}
 v_{n}w_{n}h'(0){\rm Tr}_{S(TM)}[{\rm  id }] (x_0){\rm d}\xi_n\sigma(\xi'){\rm d}x' \nonumber\\
=& -\frac{4}{3}\pi^{2}g(v^{T},w^{T})h'(0)dx' +\frac{1}{2}\pi^{2}v_{n}w_{n}h'(0){\rm d}x'.
\end{align}

(2) Explicit representation of $H_{2}$:

let $A(v)=\frac{1}{4}\Sigma_{ij} \langle\nabla_v^L{\tilde{e}_i},\tilde{e}_j\rangle c(\tilde{e}_i)c(\tilde{e}_j)$, by Lemma 3.4, we obtain
\begin{align}
\label{b22}
H_{2} =&\sigma_{0}(-2c(w)\nabla_v^{S(TM)} )\sigma_{-1}(D^{-1})
    =-2c(w)A(v)\frac{\sqrt{-1}c(\xi)}{|\xi|^2}.
\end{align}
By the Cauchy integral formula, then
\begin{align}
 \pi^+_{\xi_n}(H_{2})=& \pi^+_{\xi_n}\Big(-2c(w)A(v)\frac{\sqrt{-1}c(\xi)}{|\xi|^2}\Big)\nonumber\\
   =& \frac{-1}{  \xi_n-i }c(w)A(v)c(\xi')+\frac{-1}{  \xi_n-i }c(w)A(v)c(dx_{n}).
\end{align}
 We note  that $i<n,~\int_{|\xi'|=1}\xi_{i_{1}}\xi_{i_{2}}\cdots\xi_{i_{2d+1}}\sigma(\xi')=0$,
so we omit some items that have no contribution for computing  {\bf case a)~V)}. By the relation of the Clifford action, then
we have the equalities
\begin{align}
&{\rm Tr} \Big(c(w)A(v)c(dx_{n})\Big)\nonumber\\
=&{\rm Tr} \Big(c(w)\frac{1}{4} \Sigma_{ij} \langle\nabla_v^L{\tilde{e}_i},\tilde{e}_j\rangle c(\tilde{e}_i)c(\tilde{e}_j)c(dx_{n})\Big)\nonumber\\
=&\frac{1}{4}\sum_{i,j=1}^{n}\langle\nabla_v^L{\tilde{e}_i},\tilde{e}_j\rangle{\rm Tr} \Big(c(w) c(\tilde{e}_i)c(\tilde{e}_j)c(dx_{n})\Big)\nonumber\\
=&\frac{1}{4}\sum_{i,j,l=1}^{n}\langle\nabla_v^L{\tilde{e}_i},\tilde{e}_j\rangle w_{l}
\Big[{\rm Tr} \Big( -c(\tilde{e}_i)c(e_l)c(\tilde{e}_j)c(dx_{n})\Big) -2\delta_{ il}{\rm Tr}(c(\tilde{e}_j)c(dx_{n}))\Big]\nonumber\\
=&\frac{1}{4}\sum_{i,j,l=1}^{n}\langle\nabla_v^L{\tilde{e}_i},\tilde{e}_j\rangle w_{l}
\Big[{\rm Tr} \Big( c(\tilde{e}_i)c(\tilde{e}_j) c(e_l)c(dx_{n})\Big) -2\delta_{ jl}{\rm Tr}(c(\tilde{e}_i)c(dx_{n}))
+2\delta_{ il}\delta_{ jn}{\rm Tr}_{S(TM)}[{\rm \texttt{id}}] \Big]\nonumber\\
=&\frac{1}{4}\sum_{i,j,l=1}^{n}\langle\nabla_v^L{\tilde{e}_i},\tilde{e}_j\rangle w_{l}
\Big[{\rm Tr} \Big( -c(\tilde{e}_i)c(\tilde{e}_j)c(dx_{n})c(e_l))\Big) -2\delta_{ln}{\rm Tr}(c(\tilde{e}_i)c(dx_{j}))\nonumber\\
&+2\delta_{ il}\delta_{ jn}{\rm Tr}_{S(TM)}[{\rm \texttt{id}}]
+2\delta_{ jl}\delta_{ in}{\rm Tr}_{S(TM)}[{\rm \texttt{id}}]
 \Big],
\end{align}
and
\begin{align}
 {\rm Tr} \Big(c(w)A(v)c(dx_{n})\Big)
=&\frac{1}{8}\sum_{i,j,l=1}^{n}\langle\nabla_v^L{e_i},e_j\rangle w_{l} \Big(\delta_{ln}\delta_{ij}
 +2\delta_{ il}\delta_{ jn} +2\delta_{ jl}\delta_{ in}\Big){\rm Tr}_{S(TM)}[{\rm  id }]\nonumber\\
  =&\frac{1}{8}\Big(\sum_{i,j,l=1}^{n}\langle\nabla_v^L{e_i},e_j\rangle w_{l}
 \delta_{ln}\delta_{ij}
 + \sum_{i,j,l=1}^{n}\langle\nabla_v^L{e_i},e_j\rangle w_{l}\delta_{ il}\delta_{ jn} \nonumber\\
& +\sum_{i,j,l=1}^{n}\langle\nabla_v^L{e_i},e_j\rangle w_{l}  \delta_{ jl}\delta_{ in}\Big){\rm Tr}_{S(TM)}[{\rm  id }]\nonumber\\
 =&\frac{1}{8}\Big(\sum_{i =1}^{n}\langle\nabla_v^L{e_i},e_i \rangle w_{n}
  + \sum_{j=1}^{n}\langle\nabla_v^L{e_n},e_j \rangle w_{j}  \nonumber\\
& +\sum_{i=1}^{n}\langle\nabla_v^L{e_i},e_n\rangle w_{i}   \Big){\rm Tr}_{S(TM)}[{\rm id}]\nonumber\\
=&0,
 \end{align}
where
$
\langle\nabla_v^L{e_i},e_i \rangle+\langle e_i ,\nabla_v^L{e_i}\rangle=\nabla_v^L\langle e_i,e_i \rangle=0,
$
then
$
\langle\nabla_v^L{e_i},e_i \rangle=0.
$
And a simple computation shows
\begin{align}
\sum_{i=1}^{n}\langle\nabla_v^L{e_i},e_n \rangle w_{i}
=-\sum_{i=1}^{n-1} \langle e_i,\nabla_v^L{e_n} \rangle w_{i}= -\langle\nabla_v^L{e_n},w\rangle.
 \end{align}
Similarly, $\sum_{i=1}^{n}\langle\nabla_v^L{e_n},e_j \rangle w_{j}=\langle\nabla_v^L{e_n},w\rangle$.
Then adding these identities gives
\begin{align}\label{61}
 -i\int_{|\xi'|=1}\int^{+\infty}_{-\infty}{\rm Tr} [\pi^+_{\xi_n}(H_{2})\times
\partial_{\xi_n}\sigma_{-2}(D^{-2})](x_0){\rm d}\xi_n\sigma(\xi'){\rm d}x'=0.
\end{align}

(3)Explicit representation of $H_{3}$:
\begin{align}
\label{b22}
H_{3}=&\sum_{j=1}^{n}\partial_{\xi_{j}}\sigma_{1}\big(-2c(w)\nabla_v^{S(TM)} \big)
D_{x_{j}}\big(\sigma_{-1}(D^{-1})\big)(x_{0})\nonumber\\
=&-2c(w)v_{n}\Big( \frac{\sqrt{-1}\partial_{x_n}[c(\xi')]}{|\xi|^2}-\frac{\sqrt{-1}c(\xi)h'(0)|\xi'|^2
}{|\xi|^4}
\Big).
\end{align}
By the Cauchy integral formula, we obtain
\begin{align}
 \pi^+_{\xi_n}(H_{3})=& \pi^+_{\xi_n}\Big[ -2c(w)v_{n}\Big( \frac{\sqrt{-1}\partial_{x_n}[c(\xi')]}{|\xi|^2}-\frac{\sqrt{-1}c(\xi)h'(0)|\xi'|^2
}{|\xi|^4}
\Big)\Big]\nonumber\\
   =& \frac{-1}{  \xi_n-i }v_{n}c(w)\partial_{x_n}[c(\xi')]+\frac{1}{  2(\xi_n-i)^{2} }h'(0)v_{n}c(w)c(dx_{n})\nonumber\\
   &++\frac{\xi_n-2i}{  2(\xi_n-i)^{2} }h'(0)v_{n}c(w)c(\xi').
\end{align}
Substituting (3.60) and (3.72) into (3.57), we obtain
\begin{align}\label{61}
&-i\int_{|\xi'|=1}\int^{+\infty}_{-\infty}{\rm Tr} [\pi^+_{\xi_n}(H_{3})\times
\partial_{\xi_n}\sigma_{-2}(D^{-2})](x_0){\rm d}\xi_n\sigma(\xi'){\rm d}x'\nonumber\\
=& \int_{|\xi'|=1}\int^{+\infty}_{-\infty} \frac{-i\xi_{n}}{(\xi_{n}-i)^{4}(\xi_{n}+i)^{2}}
v_{n}w_{n}h'(0){\rm Tr}_{S(TM)}[{\rm \texttt{id}}](x_0){\rm d}\xi_n\sigma(\xi'){\rm d}x' \nonumber\\
=& 2 \pi^{2}v_{n}w_{n}h'(0){\rm d}x'.
\end{align}
Therefore, we get
\begin{align}
\widetilde{\Phi^{*}}_5= -\frac{4}{3}\pi^{2}g(v^{T},w^{T})h'(0){\rm d}x' +\frac{5}{2}\pi^{2}v_{n}w_{n}h'(0){\rm d}x'.
\end{align}

 Now $\widetilde{\Phi^{*}}$ is the sum of the cases  {\bf case a)~I)}-- {\bf case a)~V)}. Combining with the five cases, this yields
\begin{align}\label{795}
\widetilde{\Phi^{*}}=&  \frac{2}{3}\pi^{2} \partial_{x_n}\big(g(v^{T},w^{T})\big){\rm d}x'
  + \frac{1}{3}\pi^{2}\partial_{x_n}\big(v_{n}w_{n}\big)dx'\nonumber\\
 &-\frac{7}{6}\pi^{2}g(v^{T},w^{T})h'(0)dx' -\frac{11}{3}\pi^{2}v_{n}w_{n}h'(0){\rm d}x'.
\end{align}

Recall the Einstein-Hilbert action for manifolds with boundary \cite{Wa3},
\begin{align}
I_{\rm Gr}=\frac{1}{16\pi}\int_Ms{\rm dvol}_M+2\int_{\partial M}K{\rm dvol}_{\partial_M}:=I_{\rm {Gr,i}}+I_{\rm {Gr,b}},
\end{align}
 where
\begin{align}
K=\sum_{1\leq i,j\leq {n-1}}K_{i,j}g_{\partial M}^{i,j};~~K_{i,j}=-\Gamma^n_{i,j},
\end{align}
 and $K_{i,j}$ is the second fundamental form, or extrinsic
curvature. Take the metric in Section 2, then by Lemma A.2\cite{Wa3},
$K_{i,j}(x_0)=-\Gamma^n_{i,j}(x_0)=-\frac{1}{2}h'(0),$ when $i=j<n$,
otherwise is zero. For $n=4$, then
\begin{align}
K(x_0)=\sum_{i,j}K_{i.j}(x_0)g_{\partial M}^{i,j}(x_0)=\sum_{i=1}^3K_{i,i}(x_0)=-\frac{3}{2}h'(0).
\end{align}
Substituting (3.74) into (3.71), we have
 \begin{align}\label{795}
\widetilde{\Phi^{*}}=&  \frac{2}{3}\pi^{2} \partial_{x_n}g^{M}(v^{T},w^{T}\big){\rm d}x'
  + 2\pi^{2}\partial_{x_n}\big(v_{n}w_{n}\big){\rm d}x'\nonumber\\
 &-\frac{7}{9}\pi^{2}g^{M}(v^{T},w^{T}) K(x_0) {\rm d}x'+\frac{22}{9}\pi^{2}v_{n}w_{n}K(x_0){\rm d}x'.
\end{align}

Using an explicit formula for $\Phi^{*}$ and $\widetilde{\Phi^{*}}$, we can
reformulate Proposition 2.4 as follows.
\begin{thm}\label{thmb1}
Let $M$ be a $4$-dimensional oriented
compact spin manifold with boundary $\partial M$ and the metric
$g^{M}$ be defined as above, then we get the following equality:
\begin{align}
\label{b2773}
&{\rm{\widetilde{Wres}}}\Big(\pi^+\big(c(w)(Dc(v)+c(v)D)D^{-1}\big)\circ \pi^+(D^{-2})\Big)\nonumber\\
=&\frac{4\pi^{2}}{3}\int_{M}(Ric(v,w)-\frac{1}{2}s g(v,w))vol_{g}
+\int_{\partial M}\Big[\frac{2}{3}\pi^{2} \partial_{x_n}g(v^{T},w^{T}\big)dx'
  + 2\pi^{2}\partial_{x_n}\big(v_{n}w_{n}\big)dx'\nonumber\\
   &-2\pi^2 \Big(\sum_{j=1}^{n}g(e_{j},\nabla^{ TM}_{e_{j}}v)g(w,e_{n})-g(w,\nabla^{ TM}_{e_{n}}v)+g(\nabla^{ TM}_{w}v, e_{n}) \Big) \nonumber\\
 &-\frac{7}{9}\pi^{2}g^{M}(v^{T},w^{T}) K(x_0) dx'+\frac{22}{9}\pi^{2}v_{n}w_{n}K(x_0)\Big]{\rm d}x'.
\end{align}
\end{thm}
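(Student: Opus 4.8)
The plan is to assemble the identity from the decomposition already isolated in Proposition~2.4 together with the two boundary densities computed in Sections~3.1 and~3.2. Proposition~2.4 splits the composed residue as an interior Wodzicki residue plus a boundary integral,
\[
\widetilde{\mathrm{Wres}}\Big(\pi^+\big(c(w)(Dc(v)+c(v)D)D^{-1}\big)\circ\pi^+(D^{-2})\Big)
=\mathrm{Wres}\big(c(w)(Dc(v)+c(v)D)D^{-3}\big)+\int_{\partial M}\Phi .
\]
First I would invoke the algebraic identity of Proposition~3.1, which rewrites the operator $c(w)(Dc(v)+c(v)D)D^{-1}$ as the sum of $\sum_{j}c(w)c(e_{j})c(\nabla^{TM}_{e_{j}}v)D^{-1}$ and $-2c(w)\nabla^{S(TM)}_{v}D^{-1}$. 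Applying Proposition~2.4 separately to each summand, as recorded in the two displayed equations following the proof of Proposition~3.1, splits the interior residue accordingly and produces $\Phi=\Phi^{*}+\widetilde{\Phi^{*}}$ by linearity.

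Next I would evaluate the interior term through Lemma~2.3 specialized to $n=4$. Here $m=n/2=2$, so $\upsilon_{n-1}=\upsilon_{3}=2\pi^{2}/\Gamma(2)=2\pi^{2}$ and $2^{m}=4$, whence the prefactor is $\tfrac{\upsilon_{3}}{6}2^{m}=\tfrac{4\pi^{2}}{3}$ and
\[
\mathrm{Wres}\big(c(w)(Dc(v)+c(v)D)D^{-3}\big)
=\frac{4\pi^{2}}{3}\int_{M}\Big(Ric(v,w)-\tfrac12 s\,g(v,w)\Big)\mathrm{vol}_{g}.
\]
This supplies the bulk Einstein functional of the theorem, and no further interior work is required.

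For the boundary I would substitute the two densities already obtained. The term $\Phi^{*}$, computed in Section~3.1 (only the case $r=-1,\,l=-2,\,k=j=|\alpha|=0$ survives), is read off from Lemma~3.3 after a single $\xi_{n}$-contour integral and equals $-2\pi^{2}\big(\sum_{j}g(e_{j},\nabla^{TM}_{e_{j}}v)g(w,e_{n})-g(w,\nabla^{TM}_{e_{n}}v)+g(\nabla^{TM}_{w}v,e_{n})\big)$. The term $\widetilde{\Phi^{*}}$ is the sum over the five cases a)\,I)--a)\,V) of Section~3.2, assembled from the symbols $\sigma_{-1}(D^{-1})$, $\sigma_{-2}(D^{-2})$, $\sigma_{-3}(D^{-2})$ and $\sigma_{0},\sigma_{-1}(\nabla^{S(TM)}_{v}D^{-1})$ supplied by Lemmas~3.2, 3.4 and~3.5, together with the $\pi^{+}_{\xi_{n}}$-projections via the Cauchy integral formula, the boundary-derivative formulas of Lemma~3.6, and the $\xi_{n}$-residue and sphere integrations (Lemma~3.7). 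Finally I would re-express $h'(0)$ through the extrinsic curvature using $K(x_{0})=-\tfrac32 h'(0)$, converting $\widetilde{\Phi^{*}}$ into the $K(x_{0})$-form of the theorem.

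Adding $\int_{\partial M}(\Phi^{*}+\widetilde{\Phi^{*}})$ to the interior Einstein functional yields exactly the right-hand side of the statement. The genuine obstacle is the computation of $\widetilde{\Phi^{*}}$: because $-2c(w)\nabla^{S(TM)}_{v}$ is only of order zero, the residue selects five distinct symbol products rather than one, and in cases a)\,IV) and a)\,V) one must carry the subleading symbols $\sigma_{-3}(D^{-2})$ and $\sigma_{-1}(\nabla^{S(TM)}_{v}D^{-1})$, which bring in the spin-connection piece $A(v)$ and the derivative $\partial_{x_{n}}c(\xi')$. Controlling the Clifford-trace identities and the internal cancellations --- notably the vanishing of the $H_{2}$ contribution arising from $\langle\nabla^{L}_{v}e_{i},e_{i}\rangle=0$ --- is where the bookkeeping is most delicate, and it is the part most prone to sign and coefficient errors.
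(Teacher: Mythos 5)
Your proposal follows exactly the paper's own route: Proposition 3.1 to split the operator, Proposition 2.4 applied to each summand to isolate the interior residue (evaluated via Lemma 2.3 with $\upsilon_3=2\pi^2$, $2^m=4$, giving the prefactor $\tfrac{4\pi^2}{3}$) and the boundary densities $\Phi^{*}$, $\widetilde{\Phi^{*}}$, which are then assembled case by case and re-expressed through $K(x_0)=-\tfrac{3}{2}h'(0)$. The outline is correct and correctly locates the delicate part in the five-case computation of $\widetilde{\Phi^{*}}$; nothing essential is missing relative to the paper's argument.
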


\section{Residue for type-II operator}
The aim of this section is to prove the the Einstein functional for four dimensional  manifold with boundary, and
get the Dabrowski-Sitarz-Zalecki type theorems in this case.
\begin{prop}
For the   type-II operator, the Einstein functional for four dimensional  manifold with boundary defined by
\begin{align}
{\rm{\widetilde{Wres}}}\Big(\pi^+\big(c(w)(Dc(v)+c(v)D)D^{-2}\big)\circ \pi^+(D^{-1})\Big)
=&{\rm{\widetilde{Wres}}}\Big(\pi^+\big(\sum_{j=1}^{n}c(w)c(e_{j})c(\nabla^{TM}_{e_{j}}v)D^{-2}\big)\circ \pi^+(D^{-1})\Big)\nonumber\\
&+{\rm{\widetilde{Wres}}}\Big(\pi^+\big(-2c(w)\nabla^{ S(TM)}_{v}D^{-2}\big)\circ \pi^+(D^{-1})\Big).
\end{align}
\end{prop}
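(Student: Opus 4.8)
The plan is to reproduce the structure of the proof of Proposition 3.1, the sole difference being that the power of $D$ attached on the right is now $D^{-2}$ in place of $D^{-1}$. The decisive input is purely algebraic and has already been isolated in equation (3.2): since $\nabla^{S(TM)}$ is a Clifford connection,
\begin{equation}\label{eq:t2a}
Dc(v)=\sum_{j=1}^{n}c(e_{j})c(\nabla^{TM}_{e_{j}}v)-c(v)D-2\nabla^{S(TM)}_{v}.
\end{equation}
Adding $c(v)D$ to both sides, the two $c(v)D$ terms cancel and one obtains
\begin{equation}\label{eq:t2b}
Dc(v)+c(v)D=\sum_{j=1}^{n}c(e_{j})c(\nabla^{TM}_{e_{j}}v)-2\nabla^{S(TM)}_{v}.
\end{equation}
This identity holds at the level of the operator $Dc(v)+c(v)D$ alone and is entirely independent of the inverse powers of $D$ that appear elsewhere, so it may be carried over verbatim to the present setting.

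The next step is to multiply \eqref{eq:t2b} on the left by $c(w)$ and on the right by $D^{-2}$. Expanding $c(w)(Dc(v)+c(v)D)D^{-2}=c(w)Dc(v)D^{-2}+c(w)c(v)DD^{-2}$ and recombining exactly as in (3.3)---the only change being that $c(w)c(v)DD^{-2}=c(w)c(v)D^{-1}$ is now an order $-1$ term rather than the zeroth-order term $c(w)c(v)$ of the type-I computation---yields
\begin{equation}\label{eq:t2c}
c(w)\big(Dc(v)+c(v)D\big)D^{-2}=\sum_{j=1}^{n}c(w)c(e_{j})c(\nabla^{TM}_{e_{j}}v)D^{-2}-2c(w)\nabla^{S(TM)}_{v}D^{-2}.
\end{equation}

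Finally I would apply $\pi^{+}$ to both sides of \eqref{eq:t2c}, compose on the right with $\pi^{+}(D^{-1})$, and take ${\rm{\widetilde{Wres}}}$. Because $\pi^{+}$ is a linear projection, the projection of the right-hand side of \eqref{eq:t2c} is the sum of the projections of its two summands; because operator composition $\circ$ is bilinear and ${\rm{\widetilde{Wres}}}$ is a linear functional---indeed the unique continuous trace furnished by Theorem \ref{th:32}---the residue of the composite splits into the sum of the residues of the two pieces, which is exactly the asserted decomposition. I do not anticipate a genuine obstacle: the whole content reduces to the reuse of the Clifford-connection identity \eqref{eq:t2b} together with the linearity of $\pi^{+}$, of composition, and of the boundary residue. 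The single point meriting a line of care is to confirm that forming $\pi^{+}$ and composing with $\pi^{+}(D^{-1})$ produce no cross terms, so that the additive splitting of \eqref{eq:t2c} is transmitted faithfully to the noncommutative residue.
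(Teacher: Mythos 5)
Your proposal is correct and follows essentially the same route as the paper: both rest on the Clifford-connection identity (3.2), i.e. $Dc(v)+c(v)D=\sum_{j}c(e_{j})c(\nabla^{TM}_{e_{j}}v)-2\nabla^{S(TM)}_{v}$, followed by right multiplication by $D^{-2}$ and linearity of $\pi^{+}$, of composition, and of ${\rm\widetilde{Wres}}$. The only cosmetic difference is that the paper re-derives the operator identity by factoring $D^{-2}=D^{-1}D^{-1}$ and reusing the type-I expansion of $c(w)Dc(v)D^{-1}$ (with the $c(w)c(v)D^{-1}$ terms cancelling), whereas you apply (3.2) directly; the content is identical.
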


\begin{proof}
By straightforward computations yield
\begin{align}
c(w)\big(Dc(v)+c(v)D\big)D^{-2}&=c(w)Dc(v)D^{-1}D^{-1}+c(w)c(v)D^{-1}\nonumber\\
=&\Big(\sum_{j=1}^{n}c(w)c(e_{j})c(\nabla^{TM}_{e_{j}}v)D^{-1}-c(w)c(v)-2c(w)\nabla^{ S(TM)}_{v}D^{-1}\Big)D^{-1}\nonumber\\
  &+c(w)c(v)D^{-1}\nonumber\\
=&\sum_{j=1}^{n}c(w)c(e_{j})c(\nabla^{TM}_{e_{j}}v)D^{-2}-2c(w)\nabla^{ S(TM)}_{v}D^{-2}.
\end{align}
\end{proof}
Combining with the generating Proposition 2.5, this yields
\begin{align}
&{\rm{\widetilde{Wres}}}\Big(\pi^+\big(\sum_{j=1}^{n}c(w)c(e_{j})c(\nabla^{TM}_{e_{j}}v)D^{-2}\big)\circ \pi^+(D^{-1})\Big)\nonumber\\
=&Wres\big( \sum_{j=1}^{n}c(w)c(e_{j})c(\nabla^{TM}_{e_{j}}v)D^{-3}\big) +\int_{\partial M}\Phi^{**},
\end{align}
and
\begin{align}
 {\rm{\widetilde{Wres}}}\Big(\pi^+\big(-2c(w)\nabla^{ S(TM)}_{v}D^{-2}\big)\circ \pi^+(D^{-1})\Big)
= Wres \big(-2c(w)\nabla^{ S(TM)}_{v}D^{-3}\big) +\int_{\partial M}\widetilde{\Phi^{**}}.
\end{align}

\subsection{The computation of $\Phi^{**}$}

Now we can compute $\Phi^{**}$, since the sum is taken over $
-r-l+k+j+|\alpha|=3,~~r\leq-2,l\leq-1$, we get $r=-2,l=-1,~k=|\alpha|=j=0,$ then
\begin{align}
\Phi^{**}=-i\int_{|\xi'|=1}\int^{+\infty}_{-\infty}{\rm Tr} \Big[\pi^+_{\xi_n}\sigma_{-2}
\Big(\sum_{j=1}^{n}c(w)c(e_{j})c(\nabla^{ TM}_{e_{j}}v)D^{-2}\Big)\times
\partial_{\xi_n}\sigma_{-1}(D^{-1})\Big](x_0){\rm d}\xi_n\sigma(\xi'){\rm d}x'.
\end{align}
By Lemma 3.2 and the Cauchy integral formula, then
\begin{align}
\pi^+_{\xi_n}\sigma_{-2}\Big(\sum_{j=1}^{n}c(w)c(e_{j})c(\nabla^{ TM}_{e_{j}}v)D^{-2}\Big)
=\frac{1}{2i(\xi_n-i)}\sum_{j=1}^{n}c(w)c(e_{j})c(\nabla^{ TM}_{e_{j}}v).
\end{align}
From Lemma 3.2 and direct computations, we obtain
\begin{align}
\partial_{\xi_n}\sigma_{-1}(D^{-1})(x_0)|_{|\xi'|=1}=\sqrt{-1}
\left[\frac{c(dx_n)}{1+\xi_n^2}-\frac{2\xi_nc(\xi')+2\xi_n^2c(dx_n)}{(1+\xi_n^2)^2}\right].
\end{align}
A simple computation shows
\begin{align}
&{\rm Tr} \Big[\pi^+_{\xi_n}\sigma_{-1}\Big(\sum_{j=1}^{n}c(w)c(e_{j})c(\nabla^{ TM}_{e_{j}}v)D^{-1}\Big)\times
\partial_{\xi_n}\sigma_{-2}(D^{-2})\Big](x_0)\nonumber\\
=& \frac{-\xi_n}{(\xi_n-i)^3(\xi_n+i)^2}{\rm Tr} \Big(\sum_{j=1}^{n}c(w)c(e_{j})c(\nabla^{ TM}_{e_{j}}v)c(\xi')\Big)\nonumber\\
&+\frac{1-\xi_n^{2}}{2(\xi_n-i)^3(\xi_n+i)^2} {\rm Tr} \Big(\sum_{j=1}^{n}c(w)c(e_{j})c(\nabla^{ TM}_{e_{j}}v)c(dx_{n})\Big).
\end{align}
Combining (4.6), (4.7) and (4.8), we obtain
\begin{align}
\Phi^{**}=&-i\int_{|\xi'|=1}\int^{+\infty}_{-\infty} \Big[\frac{1-\xi_n^{2}}{2(\xi_n-i)^3(\xi_n+i)^2} {\rm Tr}
 \Big(\sum_{j=1}^{n}c(w)c(e_{j})c(\nabla^{ TM}_{e_{j}}v)c(dx_{n})\Big)\Big](x_0){\rm d}\xi_n\sigma(\xi'){\rm d}x'\nonumber\\
  =&\int_{\Gamma^{+}}\frac{i\xi_n^{2}-i}{2(\xi_n-i)^3(\xi_n+i)^2}{\rm Tr}
 \Big(\sum_{j=1}^{n}c(w)c(e_{j})c(\nabla^{ TM}_{e_{j}}v)c(dx_{n})\Big)\Big](x_0)\Omega_3 {\rm d}x'\nonumber\\
 =&2\pi^2 \Big(\sum_{j=1}^{n}g(e_{j},\nabla^{ TM}_{e_{j}}v)g(w,e_{n})-g(w,\nabla^{ TM}_{e_{n}}v)+g(\nabla^{ TM}_{w}v, e_{n}) \Big){\rm d}x'.
\end{align}

\subsection{The computation of $\widetilde{\Phi^{**}}$}

Now we  need to compute $\int_{\partial M} \widetilde{\Phi^{**}}$. When $n=4$, then ${\rm Tr}_{S(TM)}[{\rm \texttt{id}}]
={\rm dim}(\wedge^*(\mathbb{R}^2))=4$,  the sum is taken over $r+l-k-j-|\alpha|=-3,~~r\leq -1,~~l\leq-1,$ then we have the following five cases:

{\bf case b)~I)}~$r=-1,~l=-1,~k=j=0,~|\alpha|=1$

  By (2.15), we get
  \begin{align}
\widetilde{\Phi^{**}}_{1}=-\int_{|\xi'|=1}\int^{+\infty}_{-\infty}\sum_{|\alpha|=1}
{\rm Tr} [\partial^\alpha_{\xi'}\pi^+_{\xi_n} \sigma_{-1}(-2c(w)\nabla^{ S(TM)}_{v}D^{-2})     \times
\partial^\alpha_{x'}\partial_{\xi_n}\sigma_{-1}(D^{-1})](x_0){\rm d}\xi_n\sigma(\xi'){\rm d}x'.
\end{align}
From Lemma 3.2 and Lemma 3.6, for $i<n$, then
 \begin{align}
\partial_{x_i}\sigma_{-1}(D^{-1})(x_0)=\partial_{x_i}\left(\frac{\sqrt{-1}c(\xi)}{|\xi|^2}\right)(x_0)=
\frac{\sqrt{-1}\partial_{x_i}[c(\xi)](x_0)}{|\xi|^2}
-\frac{\sqrt{-1}c(\xi)\partial_{x_i}(|\xi|^2)(x_0)}{|\xi|^4}=0,
\end{align}
\noindent so case b) I) vanishes.

{\bf case b)~II)}~$r=-1,~l=-1,~k=|\alpha|=0,~j=1$

  By (2.15), we get
 \begin{align}
\widetilde{\Phi^{**}}_{2}=-\frac{1}{2}\int_{|\xi'|=1}\int^{+\infty}_{-\infty} {\rm
Tr} [\partial_{x_n}\pi^+_{\xi_n}\sigma_{-1}(-2c(w)\nabla^{ S(TM)}_{v}D^{-2}) \times
\partial_{\xi_n}^2\sigma_{-1}(D^{-1})](x_0){\rm d}\xi_n\sigma(\xi'){\rm d}x'.
\end{align}
From Lemma 3.2 and direct computations, we obtain
 \begin{align}
\partial^2_{\xi_n}\sigma_{-1}(D^{-1})=\sqrt{-1}\left(-\frac{6\xi_nc(dx_n)+2c(\xi')}
{|\xi|^4}+\frac{8\xi_n^2c(\xi)}{|\xi|^6}\right).
\end{align}
 By Lemma 3.4, then
 \begin{align}
\label{b22}
\sigma_{-1}(-2c(w)\nabla_v^{S(TM)}D^{-2})=&\sum_{j=1}^n(-2c(w))v_j\sqrt{-1}\xi_{j}|\xi|^{-2}.
\end{align}
An easy calculation gives
 \begin{align}
 \partial_{x_n}\sigma_{-1}(-2c(w)\nabla_v^{S(TM)}D^{-2})
=&\partial_{x_n}\Big(\sum_{j=1}^n(-2c(w))v_j\sqrt{-1}\xi_{j}|\xi|^{-2}\Big)\nonumber\\
=&-2\sqrt{-1}\sum_{j=1}^n\xi_{j}\Big[\partial_{x_n}\big(v_j c(w)\big)|\xi|^{-2}
  +v_j c(w)\partial_{x_n}\Big(|\xi|^{-2} \Big)\Big].
  \end{align}
By the Cauchy integral formula, then
 \begin{align}
\pi^+_{\xi_n} \partial_{x_n}\sigma_{-1}(-2c(w)\nabla_v^{S(TM)}D^{-2})
=&\frac{-1}{\xi_n-i}\sum_{j=1}^{n-1}\xi_j \partial_{x_n}(v_{j}c(w))
+\frac{-i(2+i\xi_n)}{2(\xi_n-i)^{2}}\sum_{j=1}^{n-1}\xi_j h'(0)v_{j}c(w)\nonumber\\
&+\frac{-i}{\xi_n-i}\partial_{x_n}(v_{n}c(w))+\frac{1}{2(\xi_n-i)^{2}}h'(0)v_{n}c(w).
  \end{align}
Substituting (4.13) (4.16) into (4.12), we have
  \begin{align}
\label{b26}
\widetilde{\Phi^{**}}_2=&-\frac{1}{2}\int_{|\xi'|=1}\int^{+\infty}_{-\infty}
{\rm Tr} \big[\partial_{x_n}\pi^+_{\xi_n}\sigma_{-1}(-2c(w)\nabla^{ S(TM)}_{v}D^{-2}) \times
\partial_{\xi_n}^2\sigma_{-1}(D^{-1})\big](x_0){\rm d}\xi_n\sigma(\xi'){\rm d}x' \nonumber\\
=&-\frac{1}{2}\int_{|\xi'|=1}\int^{+\infty}_{-\infty} \frac{i(2-6\xi_{n}^{2})}{(\xi_{n}-i)(1+\xi_{n}^{2})^{3}}
\sum_{j=1}^{n-1}\xi_{j}{\rm Tr}\Big( \partial_{x_n}\big(v_{j}c(w)\big)c(\xi') \Big)(x_0){\rm d}\xi_n\sigma(\xi'){\rm d}x' \nonumber\\
&-\frac{1}{2}\int_{|\xi'|=1}\int^{+\infty}_{-\infty} \frac{(2+i\xi_{n})(3\xi_{n}^{2}-1)}{(\xi_{n}-i)^{2}(1+\xi_{n}^{2})^{3}}
\sum_{j=1}^{n-1}\xi_{j}h'(0){\rm Tr}\big( v_{j}c(w)c(\xi')\big) (x_0){\rm d}\xi_n\sigma(\xi'){\rm d}x' \nonumber\\
&-\frac{1}{2}\int_{|\xi'|=1}\int^{+\infty}_{-\infty} \frac{ 2\xi_{n}^{3}-6\xi_{n}}{(\xi_{n}-i)(1+\xi_{n}^{2})^{3}}
{\rm Tr}\Big( \partial_{x_n}\big(v_{n}c(w)\big)c(dx_{n}) \Big)(x_0){\rm d}\xi_n\sigma(\xi'){\rm d}x' \nonumber\\
&-\frac{1}{2}\int_{|\xi'|=1}\int^{+\infty}_{-\infty} \frac{i (\xi_{n}^{3}-3\xi_{n})}{(\xi_{n}-i)^{2}(1+\xi_{n}^{2})^{3}}
h'(0){\rm Tr}\Big( \partial_{x_n}\big(v_{n}c(w)c(dx_{n})\big) \Big)(x_0){\rm d}\xi_n\sigma(\xi'){\rm d}x' \nonumber\\
=&-\frac{2}{3}\pi^{2} \partial_{x_n}\big(g^{M}(v^{T},w^{T})\big){\rm d}x'
  +\frac{5}{6}\pi^{2}g^{M}(v^{T},w^{T}) h'(0){\rm d}x'\nonumber\\
  &-2\pi^{2}\partial_{x_n}\big(v_{n}w_{n}\big)dx'-\frac{3}{2}\pi^{2}v_{n}w_{n}h'(0){\rm d}x'.
 \end{align}

{\bf case b)~III)}~$r=-1,~l=-1,~j=|\alpha|=0,~k=1$

  By (2.15), we get
 \begin{align}
\widetilde{\Phi^{**}}_{3}=-\frac{1}{2}\int_{|\xi'|=1}\int^{+\infty}_{-\infty}
{\rm Tr} [\partial_{\xi_n}\pi^+_{\xi_n}\sigma_{-1}(-2c(w)\nabla^{ S(TM)}_{v}D^{-2})\times
\partial_{\xi_n}\partial_{x_n}\sigma_{-1}(D^{-1})](x_0){\rm d}\xi_n\sigma(\xi'){\rm d}x'.
  \end{align}
 By Lemma 3.2, we have
 \begin{align}
\partial_{\xi_n}\partial_{x_n}\sigma_{-1}(D^{-1})(x_0)|_{|\xi'|=1}=-\sqrt{-1}h'(0)
\left[\frac{c(dx_n)}{|\xi|^4}-4\xi_n\frac{c(\xi')+\xi_nc(dx_n)}{|\xi|^6}\right]-
\frac{2\xi_n\sqrt{-1}\partial_{x_n}c(\xi')(x_0)}{|\xi|^4}.
  \end{align}
By the Cauchy integral formula, then
 \begin{align}
\pi^+_{\xi_n} \partial_{\xi_n}\sigma_{-1}(-2c(w)\nabla_v^{S(TM)}D^{-2})
=&\frac{1}{(\xi_n-i)^{2}}\sum_{j=1}^{n-1}\xi_j  v_{j}c(w)
+\frac{i}{(\xi_n-i)^{2}} v_{n}c(w).
  \end{align}
Combining (4.19) and (4.20), we obtain
  \begin{align}
\label{b26}
\widetilde{\Phi^{**}}_3=&-\frac{1}{2}\int_{|\xi'|=1}\int^{+\infty}_{-\infty} \frac{4i \xi_{n} }{(\xi_{n}-i)(1+\xi_{n}^{2})^{3}}
\sum_{j=1}^{n-1}\xi_{j}v_{j}h'(0){\rm Tr}\big(  c(w) c(\xi') \big)(x_0)d\xi_n\sigma(\xi'){\rm d}x' \nonumber\\
&-\frac{1}{2}\int_{|\xi'|=1}\int^{+\infty}_{-\infty} \frac{-2i\xi_{n} }{(\xi_{n}-i)^{2}(1+\xi_{n}^{2})^{2}}
\sum_{j=1}^{n-1}\xi_{j}v_{j} {\rm Tr}\big(\partial_{x_n}[c(\xi')]c(w)\big) (x_0)d\xi_n\sigma(\xi'){\rm d}x' \nonumber\\
&-\frac{1}{2}\int_{|\xi'|=1}\int^{+\infty}_{-\infty} \frac{1-3\xi_{n}^{2}}{(\xi_{n}-i)^{2}(1+\xi_{n}^{2})^{3}}
h'(0)v_{n}{\rm Tr}\big(c(w)c(dx_{n})\big) (x_0)d\xi_n\sigma(\xi'){\rm d}x' \nonumber\\
=&-\frac{1}{2}\pi^{2}g^{M}(v^{T},w^{T})h'(0){\rm d}x'
-\frac{3}{2}\pi^{2}v_{n}w_{n}h'(0){\rm d}x'.
 \end{align}

{\bf case b)~IV)}~$r=-2,~l=-1,~k=j=|\alpha|=0$

  By (2.14), we get
 \begin{align}
\widetilde{\Phi^{**}}_{4}=-i\int_{|\xi'|=1}\int^{+\infty}_{-\infty}{\rm Tr} [\pi^+_{\xi_n}\sigma_{-2}(-2c(w)\nabla^{ S(TM)}_{v}D^{-2})\times
\partial_{\xi_n}\sigma_{-1}(D^{-1})](x_0){\rm d}\xi_n\sigma(\xi'){\rm d}x'.
  \end{align}
Also, straightforward computations yield
\begin{align}
\partial_{\xi_n}\sigma_{-1}(D^{-1})(x_0)|_{|\xi'|=1}=\sqrt{-1}
\left[\frac{c(dx_n)}{1+\xi_n^2}-\frac{2\xi_nc(\xi')+2\xi_n^2c(dx_n)}{(1+\xi_n^2)^2}\right].
\end{align}
By  the composition formula of pseudodifferential operators, we obtain
\begin{align}
\label{b22}
\sigma_{-2}(-2c(w)\nabla_v^{S(TM)}D^{-2})=&\sigma_{0}(-2c(w)\nabla_v^{S(TM)} )\sigma_{-2}(D^{-2})
+\sigma_{1}(-2c(w)\nabla_v^{S(TM)} )\sigma_{-3}(D^{-2})\nonumber\\
&+\sum_{j=1}^{n}\partial_{\xi_{j}}\sigma_{1}\big(-2c(w)\nabla_v^{S(TM)} \big)
D_{x_{j}}\big(\sigma_{-2}(D^{-2})\big)\nonumber\\
=:&\widetilde{H}_{1}+\widetilde{H}_{2}+\widetilde{H}_{3}.
\end{align}

(1) Let $A(v)=\frac{1}{4}\Sigma_{ij} \langle\nabla_v^L{\tilde{e}_i},\tilde{e}_j\rangle c(\tilde{e}_i)c(\tilde{e}_j)$, then
\begin{align}
\label{b22}
\widetilde{H}_{1}=\sigma_{0}(-2c(w)\nabla_v^{S(TM)} )\sigma_{-2}(D^{-2})=-2c(w)A(v)|\xi|^{-2}.
\end{align}
By the Cauchy integral formula, we obtain
\begin{align}
 \pi^+_{\xi_n}(\widetilde{H}_{1})=& \pi^+_{\xi_n}\Big(-2c(w)A(v)|\xi|^{-2}\Big)
   = \frac{i}{  \xi_n-i }c(w)A(v).
\end{align}
By (3.68), we have
\begin{align}
 {\rm Tr} \Big(c(w)A(v)c(dx_{n})\Big)
=\frac{1}{4}\sum_{i,j,l=1}^{n}\langle\nabla_v^L{\tilde{e}_i},\tilde{e}_j\rangle w_{l} \Big(\delta_{ln}\delta_{ij}
 +2\delta_{ il}\delta_{ jn} +2\delta_{ jl}\delta_{ in}\Big){\rm Tr}_{S(TM)}[{\rm \texttt{id}}]=0.
 \end{align}
Therefore
\begin{align}\label{61}
 -i\int_{|\xi'|=1}\int^{+\infty}_{-\infty}{\rm Tr} [\pi^+_{\xi_n}(\widetilde{H}_{1})\times
\partial_{\xi_n}\sigma_{-1}(D^{-1})](x_0)d\xi_n\sigma(\xi'){\rm d}x'=0.
\end{align}

(2) From Lemma 3.4 and Lemma 3.5, we have
\begin{align}
\widetilde{H}_{2}(x_0)=&\sigma_{1}(-2c(w)\nabla_v^{S(TM)} )\sigma_{-3}(D^{-2})(x_0)\nonumber\\
=&\sum_{j=1}^nv_j(-2c(w))\sqrt{-1}\xi_{j} \times
\Big(-\sqrt{-1}|\xi|^{-4}\xi_k(\Gamma^k-2\delta^k)\nonumber\\
&-\sqrt{-1}|\xi|^{-6}2\xi^j\xi_\alpha\xi_\beta\partial_jg^{\alpha\beta}\Big)(x_0)\nonumber\\
=&\sum_{j=1}^nv_j(-2c(w))\sqrt{-1}\xi_{j} \times
\Big[\frac{-i}{(1+\xi_n^2)^2}\Big(-\frac{1}{2}h'(0)\sum_{k<n}\xi_k
c(\widetilde{e_k})c(\widetilde{e_n})+\frac{5}{2}h'(0)\xi_n\Big)\nonumber\\
&-\frac{2ih'(0)\xi_n}{(1+\xi_n^2)^3}\Big](x_0).
\end{align}
By the Cauchy integral formula, we get
\begin{align}
 \pi^+_{\xi_n}(\widetilde{H}_{2})(x_0)=&\frac{-2-i\xi_{n}}{4(\xi_{n}-i)^{2}}h'(0)
    \sum_{j=1}^{n-1}v_{j}\xi_j\sum_{k<n}\xi_k c(w)c(\widetilde{e_k})c(\widetilde{e_n})\nonumber\\
 &+\frac{-i}{4(\xi_{n}-i)^{2}}h'(0)
     v_{n} \sum_{k<n}\xi_k c(w)c(\widetilde{e_k})c(\widetilde{e_n})\nonumber\\
  &+\frac{1}{2(\xi_{n}-i)^{3}}h'(0)
      \sum_{j=1}^{n-1}v_{j}\xi_jc(w)+\frac{i}{2(\xi_{n}-i)^{3}}h'(0)
       v_{n} c(w).
\end{align}
Combining (4.23) and (4.30), we obtain
\begin{align}\label{61}
&-i\int_{|\xi'|=1}\int^{+\infty}_{-\infty}{\rm Tr} [\pi^+_{\xi_n}(\widetilde{H}_{2})\times
\partial_{\xi_n}\sigma_{-1}(D^{-1})](x_0)d\xi_n\sigma(\xi'){\rm d}x'\nonumber\\
=&-i \int_{|\xi'|=1}\int^{+\infty}_{-\infty} \frac{(-2-i\xi_{n})(i-i\xi_{n}^{2})}{4(\xi_{n}-i)^{4}(\xi_{n}+i)^{2}}
\sum_{j,k=1}^{n-1}\xi_{j}\xi_{k}v_{j}w_{k}h'(0){\rm Tr}_{S(TM)}[{\rm \texttt{id}}](x_0)d\xi_n\sigma(\xi'){\rm d}x' \nonumber\\
&-i\int_{|\xi'|=1}\int^{+\infty}_{-\infty}   \frac{1-\xi_{n}^{2}}{ 2(\xi_{n}-i)^{5}(\xi_{n}+i)^{2}}
 v_{n}w_{n}h'(0){\rm Tr}_{S(TM)}[{\rm \texttt{id}}] (x_0)d\xi_n\sigma(\xi'){\rm d}x' \nonumber\\
&-i \int_{|\xi'|=1}\int^{+\infty}_{-\infty}   \frac{-\xi_{n}}{2(\xi_{n}-i)^{4}(\xi_{n}+i)^{2}}
\sum_{j,k=1}^{n-1}\xi_{j}\xi_{k}v_{n}w_{n}h'(0){\rm Tr}_{S(TM)}[{\rm \texttt{id}}] (x_0)d\xi_n\sigma(\xi'){\rm d}x' \nonumber\\
&-i \int_{|\xi'|=1}\int^{+\infty}_{-\infty}   \frac{i\xi_{n}}{(\xi_{n}-i)^{5}(\xi_{n}+i)^{3}}
\sum_{j,k=1}^{n-1}\xi_{j}\xi_{k}v_{j}w_{k}h'(0){\rm Tr}_{S(TM)}[{\rm \texttt{id}}] (x_0)d\xi_n\sigma(\xi'){\rm d}x'\nonumber\\
=&\frac{1}{6}\pi^{2}g^{M}(v^{T},w^{T})h'(0)dx' -\frac{11}{6}\pi^{2}v_{n}w_{n}h'(0){\rm d}x'.
\end{align}

(3) Also, straightforward computations yield
\begin{align}
\widetilde{H}_{3}(x_0)=&\sum_{j=1}^{n}\partial_{\xi_{j}}\sigma_{1}\big(-2c(w)\nabla_v^{S(TM)} \big)
D_{x_{j}}\big(\sigma_{-2}(D^{-2})\big)(x_0)\nonumber\\
=&-2c(w)\sum_{j=1}^{n}\partial_{\xi_{j}} \big(\sum_{l=1}^{n}v_{l} \sqrt{-1}\xi_{l}\big)
(-\sqrt{-1})\partial_{x_{j}}\big(|\xi|^{-2}\big)(x_0)\nonumber\\
=&2c(w)v_{n}h'(0)|\xi|^{-4}.
\end{align}
A simple computation shows
\begin{align}
 \pi^+_{\xi_n}(\widetilde{H}_{3})(x_0)=2c(w)v_{n}h'(0)\pi^+_{\xi_n}(|\xi|^{-4})(x_0)=\frac{-2-i\xi_{n}}{2(\xi_{n}-i)^{2}}c(w)v_{n}h'(0).
\end{align}
Combining (4.23) and (4.33), we obtain
\begin{align}\label{61}
&-i\int_{|\xi'|=1}\int^{+\infty}_{-\infty}{\rm Tr} [\pi^+_{\xi_n}(\widetilde{H}_{3})\times
\partial_{\xi_n}\sigma_{-1}(D^{-1})](x_0)d\xi_n\sigma(\xi'){\rm d}x'\nonumber\\
=&-i \int_{|\xi'|=1}\int^{+\infty}_{-\infty} \frac{(2+i\xi_{n})(i-i\xi_{n}^{2})}{2(\xi_{n}-i)^{4}(\xi_{n}+i)^{2}}
 v_{n}w_{n}h'(0){\rm Tr}_{S(TM)}[{\rm \texttt{id}}](x_0)d\xi_n\sigma(\xi'){\rm d}x' \nonumber\\
=& -4\pi^{2}v_{n}w_{n}h'(0){\rm d}x'.
\end{align}
Hence, we conclude that,
\begin{align}
\widetilde{\Phi^{**}}_{4}= \frac{1}{6}\pi^{2}g^{M}(v^{T},w^{T})h'(0){\rm d}x' -\frac{35}{6}\pi^{2}v_{n}w_{n}h'(0){\rm d}x'.
\end{align}

{\bf case b)~V)}~$r=-1,~l=-2,~k=j=|\alpha|=0$

  By (2.15), we get
\begin{align}
\widetilde{\Phi^{**}}_{5}=-i\int_{|\xi'|=1}\int^{+\infty}_{-\infty}
{\rm trace} [\pi^+_{\xi_n}\sigma_{-1}(-2c(w)\nabla^{ S(TM)}_{v}D^{-2})\times \partial_{\xi_n}\sigma_{-2}(D^{-1})](x_0)d\xi_n\sigma(\xi'){\rm d}x'.
\end{align}
From Lemma 3.5  and direct computations, we obtain
\begin{align}
\partial_{\xi_n}\sigma_{-2}(D^{-1})(x_0)|_{|\xi'|=1}=&
\frac{1}{(1+\xi_n^2)^3}[(2\xi_n-2\xi_n^3)c(dx_n)p_0c(dx_n)+(1-3\xi_n^2)c(dx_n)p_0c(\xi') \nonumber\\
&+ (1-3\xi_n^2)c(\xi')p_0c(dx_n)-4\xi_nc(\xi')p_0c(\xi')
+(3\xi_n^2-1)\partial_{x_n}c(\xi')+2h'(0)c(\xi')\nonumber\\
&-4\xi_nc(\xi')c(dx_n)\partial_{x_n}c(\xi')+2h'(0)\xi_nc(dx_n)]
+6\xi_nh'(0)\frac{c(\xi)c(dx_n)c(\xi)}{(1+\xi^2_n)^4}.
\end{align}
By the Cauchy integral formula, we obtain
 \begin{align}
\pi^+_{\xi_n}  \sigma_{-1}(-2c(w)\nabla_v^{S(TM)}D^{-2})
=&\frac{-1}{\xi_n-i }\sum_{j=1}^{n-1}\xi_j  v_{j}c(w)
-\frac{i}{\xi_n-i} v_{n}c(w).
  \end{align}
From (4.37) and (4.38) we obtain
  \begin{align}
\label{b26}
\widetilde{\Phi^{**}}_5=&-i\int_{|\xi'|=1}\int^{+\infty}_{-\infty} \frac{(9\xi_{n}^{2}-7)(1+\xi_{n}^{2}) }{2(\xi_{n}-i)(1+\xi_{n}^{2})^{4}}
\sum_{j=1}^{n-1}\xi_{j}v_{j}{\rm Tr}\big(  c(w) c(\xi') \big)(x_0)d\xi_n\sigma(\xi'){\rm d}x' \nonumber\\
&-i\int_{|\xi'|=1}\int^{+\infty}_{-\infty} \frac{(3i\xi_{n}^{3}-13i\xi_{n})(1+\xi_{n}^{2}) }{2(\xi_{n}-i) (1+\xi_{n}^{2})^{4}}
h'(0)v_{n}{\rm Tr}\big(c(w)c(dx_{n})\big) (x_0)d\xi_n\sigma(\xi'){\rm d}x' \nonumber\\
&-i\int_{|\xi'|=1}\int^{+\infty}_{-\infty} \frac{1-3 \xi_{n}^{2} }{(\xi_{n}-i) (1+\xi_{n}^{2})^{3}}
\sum_{j=1}^{n-1}\xi_{j}v_{j} {\rm Tr}\big(\partial_{x_n}[c(\xi')]c(w)\big) (x_0)d\xi_n\sigma(\xi'){\rm d}x' \nonumber\\
&-i\int_{|\xi'|=1}\int^{+\infty}_{-\infty} \frac{4i\xi_{n}  }{(\xi_{n}-i) (1+\xi_{n}^{2})^{3}}
 v_{n} {\rm Tr}\big( c(w)c(\xi')c(dx_{n})\partial_{x_n}[c(\xi')]\big) (x_0)d\xi_n\sigma(\xi'){\rm d}x' \nonumber\\
=& \frac{3}{2}\pi^{2}g^{M}(v^{T},w^{T})h'(0){\rm d}x'
+\frac{9}{2}\pi^{2}v_{n}w_{n}h'(0){\rm d}x'.
 \end{align}

 Now $\widetilde{\Phi^{**}}$ is the sum of the cases  {\bf case b)~I)}-- {\bf case b)~V)}. Combining with the five cases, this yields
\begin{align}\label{795}
\widetilde{\Phi^{**}}=& -\frac{2}{3}\pi^{2} \partial_{x_n}\big(g^{M}(v^{T},w^{T})\big)dx'
  -2\pi^{2}\partial_{x_n}\big(v_{n}w_{n}\big){\rm d}x'\nonumber\\
 &+2\pi^{2}g^{M}(v^{T},w^{T})h'(0)dx' -\frac{4}{3}\pi^{2}v_{n}w_{n}h'(0){\rm d}x'.
\end{align}
For $n=4$, $K(x_0)=-\frac{3}{2}h'(0) $, then
 \begin{align}\label{795}
\widetilde{\Phi^{**}}=& -\frac{2}{3}\pi^{2} \partial_{x_n}g^{M}(v^{T},w^{T}\big){\rm d}x'
  -2\pi^{2}\partial_{x_n}\big(v_{n}w_{n}\big){\rm d}x'\nonumber\\
 &-\frac{4}{3}\pi^{2}g^{M}(v^{T},w^{T}) K(x_0) {\rm d}x'+\frac{8}{9}\pi^{2}v_{n}w_{n}K(x_0){\rm d}x'.
\end{align}
Using an explicit formula for $\Phi^{**}$ and $\widetilde{\Phi^{**}}$, we can
reformulate Proposition 2.5 as follows.
\begin{thm}\label{thmb1}
Let $M$ be a $4$-dimensional oriented
compact spin manifold with boundary $\partial M$ and the metric
$g^{M}$ be defined as above, then we get the following equality:
\begin{align}
\label{b2773}
&{\rm{\widetilde{Wres}}}\Big(\pi^+\big(c(w)(Dc(v)+c(v)D)D^{-1}\big)\circ \pi^+(D^{-2})\Big)\nonumber\\
=&\frac{4\pi^{2}}{3}\int_{M}(Ric(v,w)-\frac{1}{2}s g(v,w)) vol_{g}
+\int_{\partial M}\Big[-\frac{2}{3}\pi^{2} \partial_{x_n}g(v^{T},w^{T}\big)dx'
  -2\pi^{2}\partial_{x_n}\big(v_{n}w_{n}\big)dx'\nonumber\\
   &+2\pi^2 \Big(\sum_{j=1}^{n}g(e_{j},\nabla^{ TM}_{e_{j}}v)g(w,e_{n})-g(w,\nabla^{ TM}_{e_{n}}v)+g(\nabla^{ TM}_{w}v, e_{n}) \Big) \nonumber\\
 &-\frac{4}{3}\pi^{2}g^{M}(v^{T},w^{T}) K(x_0) dx'+\frac{8}{9}\pi^{2}v_{n}w_{n}K(x_0)\Big]{\rm d}x'.
\end{align}
\end{thm}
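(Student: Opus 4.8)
The plan is to assemble the identity from the abstract boundary decomposition of Proposition 2.5, the algebraic splitting of Proposition 4.1, and the closed-form interior computation of Lemma 2.3. First I would apply Proposition 2.5 to write the left-hand side (the type-II residue) as $Wres\big(c(w)(Dc(v)+c(v)D)D^{-3}\big)+\int_{\partial M}\widetilde{\Phi}$. The interior residue is a genuine Wodzicki residue, so Lemma 2.3 applies verbatim with $n=4$; since $m=2$, $\upsilon_{3}=2\pi^{2}$ and $2^{m}=4$, it evaluates to $\frac{4\pi^{2}}{3}\int_{M}\big(Ric(v,w)-\tfrac12 s\,g(v,w)\big)\,\mathrm{vol}_{g}$, which is exactly the bulk term on the right. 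This disposes of the interior contribution and reduces the theorem to an explicit evaluation of the boundary density.

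For the boundary I would invoke the Clifford-connection identity of Proposition 4.1, $c(w)(Dc(v)+c(v)D)D^{-2}=\sum_{j}c(w)c(e_{j})c(\nabla^{TM}_{e_{j}}v)D^{-2}-2c(w)\nabla^{S(TM)}_{v}D^{-2}$, so that $\int_{\partial M}\widetilde{\Phi}=\int_{\partial M}\Phi^{**}+\int_{\partial M}\widetilde{\Phi^{**}}$. For $\Phi^{**}$ the degree constraint $-r-l+k+j+|\alpha|=3$ with $r\le-2,\,l\le-1$ forces $r=-2,\,l=-1,\,k=j=|\alpha|=0$, leaving a single term; I would compute $\pi^{+}_{\xi_{n}}\sigma_{-2}$ by the Cauchy integral formula, pair it with $\partial_{\xi_{n}}\sigma_{-1}(D^{-1})$, take the fibrewise trace via Lemma 3.3, and carry out the residue integral over $\Gamma^{+}$ to obtain the clean expression $2\pi^{2}\big(\sum_{j}g(e_{j},\nabla^{TM}_{e_{j}}v)g(w,e_{n})-g(w,\nabla^{TM}_{e_{n}}v)+g(\nabla^{TM}_{w}v,e_{n})\big)\,dx'$.

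The technical heart, and the step I expect to be the main obstacle, is the evaluation of $\widetilde{\Phi^{**}}$ coming from $-2c(w)\nabla^{S(TM)}_{v}D^{-2}$. Here the constraint $r+l-k-j-|\alpha|=-3$ with $r\le-1,\,l\le-1$ admits five multi-index patterns (cases b-I through b-V). Case b-I vanishes by the normal-coordinate identity $\partial_{x_{i}}\sigma_{-1}(D^{-1})(x_{0})=0$ for $i<n$ from Lemma 3.6, while b-II and b-III follow by differentiating $\sigma_{-1}(D^{-1})$ in $\xi_{n}$ and $x_{n}$, applying $\pi^{+}_{\xi_{n}}$ via Cauchy's formula, and reducing the $|\xi'|=1$ integrals with the odd-parity rule $\int_{|\xi'|=1}\xi_{i_{1}}\cdots\xi_{i_{2d+1}}\sigma(\xi')=0$ together with Lemma 3.7. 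The two remaining cases are the most delicate, since each needs a subleading symbol: for b-IV I would expand $\sigma_{-2}(-2c(w)\nabla^{S(TM)}_{v}D^{-2})$ through the composition formula as $\widetilde{H}_{1}+\widetilde{H}_{2}+\widetilde{H}_{3}$, use the Clifford-trace vanishing $\mathrm{Tr}\big(c(w)A(v)c(dx_{n})\big)=0$ (which rests on the antisymmetry $\langle\nabla^{L}_{v}e_{i},e_{i}\rangle=0$) to annihilate $\widetilde{H}_{1}$, and pair the $h'(0)$-terms of $\widetilde{H}_{2},\widetilde{H}_{3}$ against $\sigma_{-3}(D^{-2})$ and $\sigma_{-1}(D^{-1})$ from Lemma 3.5; for b-V I would integrate the lengthy expression for $\sigma_{-2}(D^{-1})$ of Lemma 3.5 against $\pi^{+}_{\xi_{n}}\sigma_{-1}(-2c(w)\nabla^{S(TM)}_{v}D^{-2})$. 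Summing the five contributions gives $\widetilde{\Phi^{**}}=\big(-\tfrac23\pi^{2}\partial_{x_{n}}g^{M}(v^{T},w^{T})-2\pi^{2}\partial_{x_{n}}(v_{n}w_{n})+2\pi^{2}g^{M}(v^{T},w^{T})h'(0)-\tfrac43\pi^{2}v_{n}w_{n}h'(0)\big)\,dx'$.

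Finally I would substitute the extrinsic-curvature normalization $K(x_{0})=-\tfrac32 h'(0)$ valid for $n=4$ to rewrite the $h'(0)$-terms through $K(x_{0})$, and then add the bulk term, $\Phi^{**}$ and $\widetilde{\Phi^{**}}$. Collecting the four boundary contributions reproduces exactly the stated right-hand side, completing the proof. The only genuine difficulty lies in the bookkeeping across cases b-IV and b-V, where the subleading symbols and the many Clifford traces make sign and coefficient errors easy; everything else is a mechanical application of the Cauchy integral formula and the angular integrals.
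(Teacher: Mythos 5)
Your proposal is correct and follows essentially the same route as the paper: the algebraic splitting of Proposition 4.1, the interior term from Lemma 2.3 with $\upsilon_{3}=2\pi^{2}$ and $2^{m}=4$, the single-case evaluation of $\Phi^{**}$, the five-case evaluation of $\widetilde{\Phi^{**}}$ (including the vanishing of the $A(v)$-trace and the composition-formula expansion $\widetilde{H}_{1}+\widetilde{H}_{2}+\widetilde{H}_{3}$), and the final substitution $K(x_{0})=-\tfrac{3}{2}h'(0)$. The only cosmetic difference is that you apply the Proposition 2.5 decomposition before the operator splitting rather than after, which is equivalent by linearity of $\widetilde{\rm Wres}$.
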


\section*{ Acknowledgements}
The first author was supported by NSFC. 11501414. The second author was supported by NSFC. 11771070.
 The authors also thank the referee for his (or her) careful reading and helpful comments.


\begin{thebibliography}{00}
\bibitem{DL} Dabrowski L, Sitarz A, Zalecki P. Spectral Metric and Einstein Functionals. Adv. Math. 427, (2023).
  https://doi.org/10.1016/j.aim.2023.109128
 \bibitem{YIM} Yu. I. Manin, Algebraic aspects of nonlinear differential equations, J. Sov. Math. 11
(1979), 1-122 (Russian Original in Itogi Nauki i Tekhniki, ser. Sovremennye Problemy
Matematiki 11 (1978)).
 \bibitem{MA} M. Adler, On a trace functional for formal pseudo-differential operators and the symplectic
structure of Korteweg-de Vries type equations, Invent. Math. 50, 219-248, (1979).
\bibitem{Wo} M. Wodzicki.: Local invariants of spectral asymmetry. Invent. Math. 75(1), 143-178, (1984).
\bibitem{Wo1} M. Wodzicki.:  Non-commutative residue I, Lecture Notes in Math., Springer, New York, Vol. 1289
320-399, (1987).
 \bibitem{Co1} A. Connes.: Quantized calculus and applications.  XIth International Congress of Mathematical Physics(Paris,1994),
 Internat Press, Cambridge, MA, 15-36, (1995).
 \bibitem{Co2} A. Connes.: The action functinal in Noncommutative geometry. Comm. Math. Phys. 117, 673-683, (1988).



\bibitem{Co3} A. Connes, J. Lott.: Particle models and Non-commutative geometry. Nucl. Phys. B Proc.Supp 18B, 29-47, (1990).
\bibitem{Ka} D. Kastler.: The Dirac Operator and Gravitation. Comm. Math. Phys. 166, 633-643, (1995).
\bibitem{KW} W. Kalau and M. Walze.: Gravity, Noncommutative geometry and the Wodzicki residue. J. Geom. Physics. 16, 327-344,(1995).
\bibitem{FGLS} B. V. Fedosov, F. Golse, E. Leichtnam, E. Schrohe.: The noncommutative residue for manifolds with boundary.
J. Funct. Anal. 142, 1-31, (1996).
\bibitem{SCH} E. Schrohe.:  Noncommutative residue, Dixmier's trace, and heat trace expansions on manifolds with boundary. Contemp. Math.
242, 161-186, (1999).
\bibitem{Wa1} Y. Wang.: Diffential forms and the Wodzicki residue for Manifolds with Boundary. J. Geom. Physics. 56, 731-753, (2006).
\bibitem{Wa3} Y. Wang.: Gravity and the Noncommutative Residue for Manifolds with Boundary. Letters in Mathematical Physics. 80, 37-56, (2007).
\bibitem{Wa4} Y. Wang.: Lower-Dimensional Volumes and Kastler-kalau-Walze Type Theorem for Manifolds with Boundary .
      Commun. Theor. Phys. Vol 54, 38-42, (2010).
\bibitem{CoM} A. Connes and H. Moscovici.:  The local index formula in noncommutative geometry, Geom. Funct. Anal. 5,
174-243 (1995).
\bibitem{Co4} A. Connes, $C^{* }$ algebras and differential geometry, C.R.Acad. Sci. Paris Ser.A-B 290, A599-A604 (1980).
\bibitem{SBa1} S. Baaj,   Calcul pseudo-differentiel et produits croises de $C^{* }$-algebres. I. 
C. R.Acad. Sci. Paris Ser. IMath. 307, 581-586 (1988).
\bibitem{SBa2} S. Baaj, Calcul pseudo-differentiel et produits croises de $C^{* }$-algebres. II,
C. R.Acad. Sci. Paris Ser. IMath. 307, 663-666 (1988).

\bibitem{JYT} J. Wang, Y. Wang, T. Wu : Dirac operators with torsion, spectral Einstein functionals and the noncommutative residue.
Preprint. https://www.researchgate.net/publication/371335520.


\end{thebibliography}
\end{document}